\newtheorem{theorem}{Theorem}
\newtheorem{Conjecture}[theorem]{Conjecture}
\newtheorem{proposition}[theorem]{Proposition}
\newtheorem{corollary}[theorem]{Corollary}
\newtheorem{lemma}[theorem]{Lemma}
\newtheorem{Remark}[theorem]{Remark}
\begin{document}
\title{Some results on $\sigma_{t}$-irregularity}

\markright{Abbreviated Article Title}

\author [a]{Slobodan Filipovski \thanks{Corresponding author; {slobodan.filipovski@famnit.upr.si}}}

\affil[a] {\small \it FAMNIT, University of Primorska, Koper, Slovenia}

\author [b]{Darko Dimitrov}

\affil[b] {\small \it Faculty of Information Studies in Novo Mesto, Slovenia}

\author [c]{ Martin Knor}

\affil[c] {\small \it Slovak University of Technology in Bratislava, Slovakia}

\author [a,b,d, e]{ Riste Škrekovski}
\affil[d] {\small \it Faculty of Mathematics and Physics, University of Ljubljana, Slovenia}
\affil[e] {\small \it Rudolfovo - Science and Technology Centre Novo Mesto, Slovenia}

 \date{}
\maketitle

\begin{abstract}
The $\sigma_{t}$-irregularity (or sigma total index) is a graph invariant which is defined as $\sigma_{t}(G)=\sum_{\{u,v\}\subseteq V(G)}(d(u)-d(v))^{2},$ where $d(z)$ denotes the degree of $z$. 
This irregularity measure was proposed by R\' {e}ti [Appl. Math. Comput. 344-345 (2019) 107-115], and recently rediscovered by Dimitrov and Stevanović [Appl. Math. Comput. 441 (2023) 127709].
In this paper we remark that $\sigma_{t}(G)=n^{2}\cdot\rm{Var}(\emph{G})$, where $\rm{Var}(\emph{G})$  is the degree variance of the graph. Based on this observation, we characterize irregular 
graphs with maximum $\sigma_{t}$-irregularity.
We show that among all connected graphs on $n$ vertices, the split graphs $S_{\lceil\frac{n}{4}\rceil, \lfloor\frac{3n}{4}\rfloor }$ and $S_{\lfloor\frac{n}{4}\rfloor, \lceil\frac{3n}{4}\rceil }$ 
have the maximum $\sigma_{t}$-irregularity, and among all complete bipartite graphs on $n$ vertices, either the complete bipartite graph $K_{\lfloor\frac{n}{4}(2-\sqrt{2})\rfloor, 
\lceil\frac{n}{4}(2+\sqrt{2})\rceil }$ or $K_{\lceil\frac{n}{4}(2-\sqrt{2})\rceil, \lfloor\frac{n}{4}(2+\sqrt{2})\rfloor }$ has the maximum sigma total index.
Moreover, various upper and lower bounds for $\sigma_{t}$-irregularity are provided; in this direction we give a relation between the graph energy $\mathcal{E}(G)$ and sigma total index 
$\sigma_{t}(G)$ and give another proof of two results by Dimitrov and Stevanović.
Applying Fiedler's characterization of the largest and the second smallest Laplacian eigenvalue of the graph, we also establish new relationships between $\sigma_{t}$ and $\sigma$. 
We conclude the paper with two conjectures.

\end{abstract}

\maketitle






\section{Introduction}
Let $G$ be an undirected graph with $n$ vertices and $m$ edges without loops and multiple edges. The degree of a vertex $v$, denoted by $d(v)$, is the number of vertices connected to $v$.
Since $\sum_{v\in V(G)}d(v)=2m,$ the average of vertex degrees is $\overline{d}=\overline{d}(G)=\frac{2m}{n}.$
A graph is $k$-regular if every degree is equal to $k$. Otherwise, the graph is said to be an
irregular graph. Once we know the average degree of a graph, it is possible to compute
more complex measures of the heterogeneity in connectivity across vertices (e.g., the extent
to which there is a very big spread between well-connected and not so well-connected
vertices in the graph) beyond the simpler measures of range such as the difference between
the maximum degree $\Delta$ and the minimum degree $\delta$.
One such measure was proposed by Tom Snijders in \cite{snijders}, called the \emph{degree variance} of the graph.
This vertex-based measure is defined as the average squared deviation between
the degree of each vertex and the average degree:
\begin{equation}\label{var}
\textrm{Var(\emph{G})}=\frac{\sum_{u\in V(G)}(d(u)-\overline{d})^{2}}{n}=\frac{\sum_{u\in V(G)}(d(u)-\frac{2m}{n})^{2}}{n}.
\end{equation}
Note that the degree variance of a regular graph is always equal to zero. The novel irregularity $\rm{IRV}(G)$ which is considered as a modified version of $\rm{Var}(\emph{G})$ index was defined by R\' 
{e}ti \cite{reti} as
\begin{equation}
\rm{IRV}(\emph{G})=\emph{n}^{2}\cdot \rm{Var}(\emph{G}).
\end{equation}
A general edge-additive index is defined as the sum, over all edges, of edge effects. These
effects can be of various types, but the most common ones are defined in terms of some
property of the end-vertices of the considered edge. In many cases the edge contribution
represents how similar its end-vertices are.
The Albertson irregularity of a connected graph $G$, introduced by Albertson \cite{alb} in 1997, was defined by
$$
\textrm{irr(\emph{G})}=\sum_{uv\in E(G)}|d_{G}(u)-d_{G}(v)|,
$$
where $|d_{G}(u)-d_{G}(v)|$ is called the imbalance of an edge $uv$.
This index has been of interest to mathematicians, chemists and scientists from related
fields due to the fact that the Albertson irregularity plays a major role in irregularity
measures of graphs predicting the biological activities and properties of
chemical compounds in the QSAR/QSPR modeling and the quantitative characterization
of network heterogeneity.
Due to their simple computation, the degree variance $\rm{Var}(\emph{G})$
and the Albertson irregularity $\rm{irr}(\emph{G})$ belong to the family of the widely used irregularity indices,
see \cite{abdo, abdo1, alb, chen, reti}.

One natural variation of the Albertson irregularity is the irregularity
measure $\sigma(G)$ \cite{gutman1}, defined as
\begin{equation}
\label{sigma}
\sigma(G)=\sum_{uv\in E(G)}(d(u)-d(v))^{2}.
\end{equation}
Simple calculations show that $\sigma(G)=F(G)-2M_{2}(G)$, where $F(G)=\sum_{u\in V(G)} d(u)^{3}$ is the forgotten topological index \cite{fugu} and $M_{2}(G)=\sum_{uv\in E(G)}d(u)d(v)$ is the second Zagreb index of 
$G.$ For the sake of completeness we also recall the first Zagreb index $M_{1}(G)=\sum_{u \in V(G)} d(u)^{2}.$
More results on $\sigma$-irregularity can be found in \cite{abdo2, abdo3}.
Since graphs with the same degree sequence do not necessarily have the same $\sigma$-irregularity, Dimitrov and Stevanović \cite{ds} introduced a modification of $\sigma$-irregularity. This 
modification is called the \emph{$\sigma_t$-irregularity} and is defined as
\begin{equation}
\label{total}
\sigma_{t}(G)=\sum_{\{u,v\}\subseteq V(G)}(d(u)-d(v))^{2}.
\end{equation}

We recall three results on $\sigma_{t}$ from \cite{ds}.

\begin{proposition}
\label{eden}
Let $G$ be a simple connected graph with $n$ vertices and $m$ edges. Then
$$
\sigma_{t}(G)=nM_{1}(G)-4m^{2}.
$$
\end{proposition}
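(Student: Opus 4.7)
The plan is to prove Proposition \ref{eden} by a direct algebraic expansion of the definition of $\sigma_{t}(G)$, relying only on the handshake identity $\sum_{u\in V(G)} d(u)=2m$ and the definition $M_{1}(G)=\sum_{u\in V(G)} d(u)^{2}$.

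First I would rewrite the sum over unordered pairs as half the sum over ordered pairs of distinct vertices, and observe that the diagonal terms $u=v$ contribute $0$, so
\[
\sigma_{t}(G)=\frac{1}{2}\sum_{u,v\in V(G)}\bigl(d(u)-d(v)\bigr)^{2}.
\]
Next I would expand the square to obtain three double sums:
\[
\sigma_{t}(G)=\frac{1}{2}\left(\sum_{u,v}d(u)^{2}-2\sum_{u,v}d(u)d(v)+\sum_{u,v}d(v)^{2}\right).
\]

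Then I would simplify each term. The first and third double sums each equal $n\sum_{u}d(u)^{2}=nM_{1}(G)$, because the inner index runs freely over $n$ vertices. The cross term factors as $\sum_{u,v}d(u)d(v)=\bigl(\sum_{u}d(u)\bigr)\bigl(\sum_{v}d(v)\bigr)=(2m)^{2}=4m^{2}$ by the handshake identity. Substituting and collecting gives exactly $\sigma_{t}(G)=nM_{1}(G)-4m^{2}$.

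There is no real obstacle here; the proof is a one-paragraph algebraic identity. The only point requiring a small amount of care is the bookkeeping between unordered pairs $\{u,v\}$ and ordered pairs $(u,v)$, which is handled by the factor $1/2$ after noting that the diagonal contributes nothing. Connectedness of $G$ is not actually used in the argument, so the identity in fact holds for any simple graph.
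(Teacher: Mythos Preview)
Your proof is correct. The paper does not give its own proof of Proposition~\ref{eden} (it is recalled from \cite{ds}), but the identical computation appears inside the proof of Lemma~\ref{četiri}, where the authors write $\sigma_{t}(G)=\sum_{i<j}(d_i-d_j)^2=n\sum_i d_i^2-(\sum_i d_i)^2$ in one step; your version is the same identity with the ordered/unordered-pair bookkeeping made explicit, and your remark that connectedness is unnecessary is also correct.
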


\begin{proposition}
\label{dva}
Among all trees on $n$ vertices, the star has the maximum $\sigma_{t}$-irregularity.
\end{proposition}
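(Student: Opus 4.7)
The plan is to use Proposition~\ref{eden} to reduce the extremal problem to one about the first Zagreb index. Since every tree on $n$ vertices has exactly $m = n-1$ edges, Proposition~\ref{eden} gives
\[
\sigma_{t}(T) \;=\; n\,M_{1}(T) - 4(n-1)^{2},
\]
so the quantity $-4m^{2}$ is constant on the class of trees and maximizing $\sigma_{t}$ over trees on $n$ vertices is equivalent to maximizing $M_{1}(T)=\sum_{v}d(v)^{2}$.

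I would then pass from trees to degree sequences. Any tree on $n$ vertices satisfies $\sum_{v} d(v)=2(n-1)$ with $d(v)\ge 1$, and conversely every positive integer sequence $(d_{1},\ldots,d_{n})$ summing to $2(n-1)$ is the degree sequence of some tree (realizable, for example, by an appropriate caterpillar). Hence the problem becomes: maximize $\sum d_{i}^{2}$ subject to $\sum d_{i}=2(n-1)$ and $d_{i}\ge 1$.

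This last problem is solved by a standard exchange/convexity argument. Whenever two coordinates satisfy $d_{j}\ge d_{k}\ge 2$, replacing them by $(d_{j}+1,d_{k}-1)$ preserves both the sum and the lower bound $d_{i}\ge 1$, while strictly increasing $\sum d_{i}^{2}$ by $2(d_{j}-d_{k})+2>0$. Iterating drives every coordinate but one down to $1$, concentrating the remaining mass $2(n-1)-(n-1)=n-1$ at a single vertex. This is exactly the degree sequence $(n-1,1,\ldots,1)$ of the star $S_{n}$, which therefore achieves the unique maximum, with $M_{1}(S_{n})=(n-1)^{2}+(n-1)=n(n-1)$ and consequently $\sigma_{t}(S_{n})=n^{2}(n-1)-4(n-1)^{2}=(n-1)(n-2)^{2}$.

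The main obstacle is not technical but stylistic: one must justify the realizability of arbitrary positive-integer degree sequences as trees (so that the sequence-level optimum actually corresponds to an existing tree), and one must phrase the exchange step carefully to obtain \emph{strict} inequality, which yields uniqueness of the star as the extremal tree. An alternative, slightly more hands-on version replaces the sequence-level exchange by an explicit local tree modification: take any non-star tree, pick an internal vertex $v$ of non-maximum degree with a leaf neighbor $w$, detach the pendant edge $vw$, and reattach $w$ to a fixed vertex $u$ of maximum degree; a direct computation shows $M_{1}$ strictly increases, and finiteness terminates the process at $S_{n}$.
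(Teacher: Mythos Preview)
Your main argument is correct. Note, however, that the paper does not supply its own proof of Proposition~\ref{dva}: it is one of the three results merely recalled from~\cite{ds}, so there is no in-paper proof to compare against. That said, your exchange step is precisely Lemma~\ref{pet} specialised to degree sequences, and via Lemma~\ref{četiri} your reduction to maximising $M_{1}$ is the same as maximising the degree variance; your approach is therefore entirely in the spirit of the paper's own toolkit and would slot in naturally as the missing proof.

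One small remark on your alternative sketch: requiring the internal vertex $v$ to have \emph{non-maximum} degree is both unnecessary and sometimes impossible (in $P_{n}$ every internal vertex already has the maximum degree~$2$). The clean version picks any leaf $w$ not adjacent to a fixed maximum-degree vertex $u$, lets $v$ be the unique neighbour of $w$, and reattaches $w$ from $v$ to $u$; then $M_{1}$ increases by $2\bigl(d(u)-d(v)\bigr)+2\ge 2$ regardless of whether $d(v)=d(u)$, and the resulting graph is still a tree.
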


\begin{proposition}
\label{tri}
Among all trees on $n$ vertices, the path graph has the minimal $\sigma_{t}$-irregularity.
\end{proposition}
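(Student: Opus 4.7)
The plan is to reduce the claim to the problem of minimizing the first Zagreb index $M_1(T)=\sum_{v}d(v)^2$ over trees on $n$ vertices, via Proposition \ref{eden}. Since every tree on $n$ vertices has $m=n-1$ edges, that proposition gives
$$
\sigma_{t}(T)=n\,M_1(T)-4(n-1)^2,
$$
and the constant $4(n-1)^2$ is the same for every tree. Thus minimizing $\sigma_t$ over trees on $n$ vertices is equivalent to minimizing $M_1$ over trees on $n$ vertices.

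By the handshake lemma, the degree sequence of any tree on $n\ge 2$ vertices is a sequence $(d_1,\ldots,d_n)$ of positive integers summing to $2(n-1)$. Hence I would reduce to the purely combinatorial claim: among all such sequences, $\sum d_i^2$ is minimized exactly by the sequence with two $1$'s and $n-2$ $2$'s, attaining the value $4n-6$. For this I would apply the standard smoothing (majorization) argument: if two entries satisfy $d_i=a\ge b+2=d_j+2$, then replacing them by $a-1$ and $b+1$ preserves the sum, keeps every entry $\ge 1$, and strictly decreases the sum of squares by $2(a-b-1)>0$. Iterating, any minimizer has all entries differing by at most $1$; since the average $2(n-1)/n$ lies strictly between $1$ and $2$, the only admissible multiset is two $1$'s and $n-2$ $2$'s, giving $M_1=4n-6$ as claimed.

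Finally, I would invoke the classical characterization of paths as the trees with exactly two leaves, which ensures that the extremal degree sequence $(2,\ldots,2,1,1)$ is realized, up to isomorphism, only by $P_n$. Combining this with the strict decrease statement from the smoothing step (any tree containing a vertex of degree $\ge 3$ has $M_1>4n-6$), we conclude that $P_n$ is the unique minimizer of $M_1$, and therefore of $\sigma_t$, among trees on $n$ vertices. The only mildly delicate point in the whole argument is justifying realizability and uniqueness of the extremal tree, but both follow immediately from the leaf characterization of paths, so no serious obstacle is expected.
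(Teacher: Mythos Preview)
Your argument is correct. It is, however, a genuinely different route from the one the paper takes. In the paper this proposition is first quoted from \cite{ds} without proof, and then re-derived later (Corollary~\ref{a3}) as an application of the new lower bound of Theorem~\ref{a1}, namely $\sigma_t(G)\ge \frac{k}{n-k}(n\Delta-2m)^2$; the proof there proceeds by case analysis on the maximum degree $\Delta$ and the number $k$ of vertices attaining it, checking $\Delta=2$, $\Delta=3$ with $k=1$, $\Delta=3$ with $k\ge 2$, and $\Delta\ge 4$ separately. Your approach instead uses Proposition~\ref{eden} to reduce the question to minimizing $M_1$ over trees, and then a clean smoothing/majorization argument on integer sequences with fixed sum. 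Your route is more elementary and entirely self-contained (it needs nothing beyond Proposition~\ref{eden} and convexity of $x\mapsto x^2$), and it yields uniqueness in one stroke; the paper's route, by contrast, is meant to advertise Theorem~\ref{a1} and therefore deliberately avoids the $M_1$ reduction. Both arguments give the same sharp value $\sigma_t(P_n)=n(4n-6)-4(n-1)^2=2n-4$ and the same characterization of equality.
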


In this paper we present some further results on $\sigma_{t}$-irregularity.
First, we observe that  $\sigma_{t}(G)=n^{2}\cdot \rm{Var}(\emph{G})=\rm{IRV}(\emph{G}),$
which means that the sigma total index defined in \cite{ds} and the irregularity measure $\rm{IRV}(\emph{G})$ defined in \cite{reti} represent the same quantity, but the equality between them was 
not discovered before. By using this equality and known properties of the variance, we characterize the irregular graphs with maximum and minimal $\sigma_{t}$-irregularity.
Moreover, in Section 5  we give new relations between the $\sigma$- and $\sigma_{t}$-irregularity.

\section{Preliminary Results}

In this section we establish several lemmas, which are necessary for our main results.

\begin{lemma}
\label{četiri}
Let $G$ be a graph on $n$ vertices and $m$ edges. Then
\begin{equation}
\label{lema}
\sigma_{t}(G)=n^{2}\cdot \rm{Var}(\textit{G}),
\end{equation}
where $\rm{Var}(\textit{G})$ is the degree variance of the graph.
\end{lemma}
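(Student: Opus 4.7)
The plan is to reduce the claim to the well-known algebraic identity
\[
\sum_{\{i,j\}\subseteq\{1,\dots,n\}} (x_i-x_j)^2 \;=\; n\sum_{i=1}^n (x_i-\bar{x})^2,
\]
valid for any real numbers $x_1,\dots,x_n$ with mean $\bar{x}=\tfrac{1}{n}\sum_i x_i$. Specialising $x_i := d(v_i)$ and using $\bar{x}=\overline{d}=2m/n$, the left-hand side becomes $\sigma_t(G)$ by definition \eqref{total}, while the right-hand side becomes $n\cdot(n\cdot\mathrm{Var}(G))=n^2\cdot\mathrm{Var}(G)$ by definition \eqref{var}. So the whole lemma is a single substitution away from that identity.

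The first step is therefore to verify the identity. I would prove it by expanding $(x_i-x_j)^2=x_i^2-2x_ix_j+x_j^2$ and writing the unordered sum as half the ordered sum:
\[
\sum_{\{i,j\}}(x_i-x_j)^2 \;=\; \tfrac{1}{2}\Big(n\!\sum_i x_i^2 - 2\big(\textstyle\sum_i x_i\big)^2 + n\!\sum_j x_j^2\Big) \;=\; n\!\sum_i x_i^2 - \big(\textstyle\sum_i x_i\big)^2.
\]
On the other side, $n\sum_i(x_i-\bar{x})^2 = n\sum_i x_i^2 - 2\bar{x}\cdot n\sum_i x_i + n^2\bar{x}^2 = n\sum_i x_i^2 - (\sum_i x_i)^2$, using $\sum_i x_i = n\bar{x}$. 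Both expressions match, proving the identity.

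The second step is just the substitution described above, together with $\sum_{v}d(v)=2m$; this yields $\sigma_t(G)=nM_1(G)-4m^2=n^2\cdot\mathrm{Var}(G)$. As a sanity check, the formula $\sigma_t(G)=nM_1(G)-4m^2$ recovers Proposition~\ref{eden}, so the lemma is consistent with the known identity from \cite{ds}; in fact one could shorten the argument further by invoking Proposition~\ref{eden} and only expanding $n^2\cdot\mathrm{Var}(G)$.

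There is no real obstacle here: the lemma is essentially a restatement of the classical ``sum of pairwise squared differences equals $n$ times the sum of squared deviations'' identity. The only thing to be careful about is the convention $\{u,v\}\subseteq V(G)$ in \eqref{total}, which I interpret as ranging over unordered pairs of \emph{distinct} vertices (diagonal terms vanish anyway since $(d(u)-d(u))^2=0$), so the factor of $\tfrac{1}{2}$ introduced above is correct.
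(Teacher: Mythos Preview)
Your proof is correct and follows essentially the same approach as the paper: both arguments reduce each side of the claimed equality to the common expression $n\sum_i d_i^2 - (\sum_i d_i)^2$, you via a general identity for real tuples and the paper by manipulating the degree sequence directly. The only cosmetic difference is that you state the algebraic identity abstractly before substituting $x_i=d(v_i)$, whereas the paper works with the degrees from the outset.
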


\begin{proof} Let $d_{1}\geq d_{2}\geq \cdots \geq d_{n}$ be the degree sequence of $G$. We have
$$
\sigma_{t}(G)=\sum_{1\leq i < j \leq n} (d_{i}-d_{j})^{2}=n \sum_{i=1}^{n}d_{i}^{2}-(d_{1}+\cdots+d_{n})^{2}$$
$$=n\left(\sum_{i=1}^{n}d_{i}^{2}-n\left(\frac{d_{1}+\cdots+d_{n}}{n}\right)^{2}\right)=n\left(\sum_{i=1}^{n}d_{i}^{2}-n\cdot\overline{d}^{2}\right)=n\left(\sum_{i=1}^{n}d_{i}^{2}-2n\overline{d}^{2}+n\overline{d}^{2}\right)$$
$$=n\left(\sum_{i=1}^{n}d_{i}^{2}-2\overline{d}\cdot (d_{1}+\cdots+d_{n})+n\overline{d}^{2}\right)=n\left((d_{1}-\overline{d})^{2}+\cdots+(d_{n}-\overline{d})^{2}\right)=n^{2}\cdot 
\rm{Var}(\emph{G}).
$$
\end{proof}

\begin{lemma}\label{pet}
Let $(A)=\{a_i\}_{i=1}^n$ be a sequence of real numbers, such that $a_{1}\geq a_{2}\geq \cdots \geq a_{n}$.
For fixed $i$ and $j$, $i<j,$ we form a new sequence $(B)$, based on $(A)$,  where the $i$-th term of $(A)$ is increased by one and the $j$-th term is decreased by one.
Then
$$
\rm{Var}(\textit{A})<\rm{Var}(\textit{B}).
$$
\end{lemma}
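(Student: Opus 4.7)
The plan is to exploit the fact that the operation transforming $(A)$ into $(B)$ preserves the sum (and hence the mean) of the sequence, so the variance comparison reduces to comparing two pairs of squared deviations. First I would observe that
\[
\sum_{k=1}^n b_k = (a_i+1) + (a_j-1) + \sum_{k \neq i,j} a_k = \sum_{k=1}^n a_k,
\]
so the two sequences share the same mean $\overline{a}=\overline{b}$. Using the characterization $\mathrm{Var}(X) = \frac{1}{n}\sum_{k}(x_k-\overline{x})^2$ (as employed in the proof of Lemma~\ref{četiri}), all terms in $n\cdot\mathrm{Var}(B) - n\cdot\mathrm{Var}(A)$ with index $k \neq i,j$ cancel immediately.

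What remains is the two-term difference
\[
\bigl((a_i+1)-\overline{a}\bigr)^2 + \bigl((a_j-1)-\overline{a}\bigr)^2 - (a_i-\overline{a})^2 - (a_j-\overline{a})^2.
\]
Setting $x=a_i-\overline{a}$ and $y=a_j-\overline{a}$ and expanding gives
\[
(x+1)^2+(y-1)^2-x^2-y^2 = 2(x-y)+2 = 2(a_i-a_j)+2.
\]
Since $i<j$ and the sequence $(A)$ is non-increasing, $a_i \geq a_j$, so this quantity is at least $2>0$. Dividing by $n$ yields $\mathrm{Var}(B)-\mathrm{Var}(A) \geq 2/n > 0$, which is the desired strict inequality.

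The argument is entirely elementary; there is essentially no obstacle, since the invariance of the mean under the perturbation is what makes everything collapse to a two-term algebraic identity. The only thing one has to be careful about is the orientation of the inequality $a_i \geq a_j$, which is exactly guaranteed by the hypothesis that $(A)$ is sorted in non-increasing order and $i<j$; this is what makes the perturbation (increasing the larger entry, decreasing the smaller one) genuinely spread the sequence and thereby strictly increase its variance.
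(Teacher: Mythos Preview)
Your proof is correct and follows essentially the same approach as the paper: observe that the mean is preserved, reduce $n\cdot\mathrm{Var}(B)-n\cdot\mathrm{Var}(A)$ to the two-term difference $(a_i+1-\overline{a})^2+(a_j-1-\overline{a})^2-(a_i-\overline{a})^2-(a_j-\overline{a})^2=2(a_i-a_j)+2>0$. The paper's version is merely more terse.
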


\begin{proof}
First, notice that both sequences have the same mean $\overline{a}=\frac{a_{1}+\cdots+a_{n}}{n}$.
Hence
$$
n\cdot \rm{Var}(\emph{B})-\textit{n}\cdot 
\rm{Var}(\emph{A})=(\emph{a}_{\textit{i}}+1-\overline{\emph{a}})^{2}+(\emph{a}_{\textit{j}}-1-\overline{\emph{a}})^{2}-(\emph{a}_{\textit{i}}-\overline{\emph{a}})^{2}-(\emph{a}_{\textit{j}}-\overline{\emph{a}})^{2}=2(\emph{a}_{\textit{i}}-\emph{a}_{\textit{j}})+2>0.
$$
\end{proof}

In order to simplify the characterization of irregular graphs with maximum $\sigma_{t}$-irregularity we use the following lemma.

\begin{lemma}\label{šest}
For any positive integer $n\ge 2$ it holds
\begin{itemize}
\item [a)]
$$
\left\lceil \frac{1}{8}(5n-2 - \sqrt{9n^{2}-4n+4})\right\rceil =\left\lceil \frac{n}{4}\right\rceil
$$
\item [b)]
\begin{equation}
\left\lfloor \frac{1}{8}(5n-2 - \sqrt{9n^{2}-4n+4})\right\rfloor =
\left\{
\begin{array}{ll} \lfloor\frac{n}{4}\rfloor -1, & \mbox{if $n$ is divisible by 4}; \\
\lfloor\frac{n}{4}\rfloor, & \mbox{ otherwise}. \\
\end{array}
\right.
\end{equation}
\end{itemize}

\end{lemma}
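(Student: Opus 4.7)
Set $f(n)=\tfrac{1}{8}\bigl(5n-2-\sqrt{9n^2-4n+4}\bigr)$. My plan is to sandwich $f(n)$ tightly around $n/4$, namely to establish
\[
\frac{n}{4}-\frac{1}{4} \;<\; f(n) \;<\; \frac{n}{4}
\]
for every $n\geq 2$, and then to read off both identities by a brief case analysis on $n \bmod 4$.

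First I would prove the upper bound: $f(n)<n/4$ rearranges to $\sqrt{9n^2-4n+4}>3n-2$, and since both sides are positive for $n\geq 1$, squaring reduces this to $9n^2-4n+4>9n^2-12n+4$, i.e.\ to the trivially true $8n>0$. Next I would prove the lower bound: $f(n)>n/4-1/4$ rearranges to $\sqrt{9n^2-4n+4}<3n$, which after squaring becomes $n>1$, holding strictly for $n\geq 2$.

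With the sandwich in hand, I would write $n=4q+r$ with $r\in\{0,1,2,3\}$, so that the bound places $f(n)$ in the open interval $(q+\tfrac{r-1}{4},\,q+\tfrac{r}{4})$ of length $\tfrac14$. For $r\in\{1,2,3\}$ this interval sits strictly inside $(q,q+1)$, which forces $\lfloor f(n)\rfloor=q=\lfloor n/4\rfloor$ and $\lceil f(n)\rceil=q+1=\lceil n/4\rceil$, matching both formulas. For $r=0$ the interval sits strictly inside $(q-1,q)$, whence $\lfloor f(n)\rfloor=q-1=\lfloor n/4\rfloor-1$ and $\lceil f(n)\rceil=q=\lceil n/4\rceil$, again matching both formulas.

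The only real obstacle is spotting the width-$\tfrac14$ sandwich; once that is pinned down, the entire argument reduces to two squarings of nonnegative quantities together with a four-case tabulation, with no case requiring further work.
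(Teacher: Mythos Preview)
Your proposal is correct and follows essentially the same approach as the paper: both proofs hinge on the width-$\tfrac14$ sandwich $\tfrac{n}{4}-\tfrac{1}{4}<f(n)<\tfrac{n}{4}$, verify it by the same two squarings (reducing to $8n>0$ and $n>1$ respectively), and then finish by a case analysis on $n\bmod 4$. Your write-up of the case analysis via $n=4q+r$ is in fact a bit cleaner than the paper's, which argues through $\lceil (n-1)/4\rceil$ and $\lfloor (n-1)/4\rfloor$ and then handles the exceptional residues separately.
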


\begin{proof} It suffices to prove that
\begin{equation}
\label{ineq}
\frac{n}{4}-\frac{1}{4}< \frac{1}{8}(5n-2 - \sqrt{9n^{2}-4n+4}) <\frac{n}{4}.
\end{equation}

The reason is that (\ref{ineq}) implies
$\left\lceil\frac{n-1}4\right\rceil\le
\left\lceil\frac{1}{8}(5n-2 - \sqrt{9n^{2}-4n+4})\right\rceil
\le\left\lceil\frac n4\right\rceil$,
which gives the equation
$\left\lceil \frac{1}{8}(5n-2 - \sqrt{9n^{2}-4n+4})\right\rceil 
=\left\lceil\frac n4\right\rceil$
for every $n$ except $n\equiv 1\pmod 4$.
However, if $n\equiv 1\pmod 4$ then 
$$
\left\lceil\frac{n-1}4\right\rceil=\frac n4-\frac 14<
\frac{1}{8}(5n-2 - \sqrt{9n^{2}-4n+4})
<\left\lceil\frac n4\right\rceil
$$
and consequently $\left\lceil \frac{1}{8}(5n-2 - \sqrt{9n^{2}-4n+4})\right\rceil 
=\left\lceil\frac n4\right\rceil$ also in this case.

On the other hand, (\ref{ineq}) implies
$\left\lfloor\frac{n-1}4\right\rfloor\le
\left\lfloor\frac{1}{8}(5n-2 - \sqrt{9n^{2}-4n+4})\right\rfloor
\le\left\lfloor\frac n4\right\rfloor$,
which gives the equation
$\left\lfloor\frac{1}{8}(5n-2 - \sqrt{9n^{2}-4n+4})\right\rfloor
=\left\lfloor\frac n4\right\rfloor$ for every $n$ except $n\equiv 0\pmod 4$.
If $n\equiv 0\pmod 4$ then
$$
\frac{n-1}4<
\frac{1}{8}(5n-2 - \sqrt{9n^{2}-4n+4})
<\frac n4=\left\lfloor\frac n4\right\rfloor
$$
and so $\left\lfloor \frac{1}{8}(5n-2 - \sqrt{9n^{2}-4n+4})\right\rfloor 
=\left\lfloor\frac n4\right\rfloor-1$.

Observe that the right-hand side of (\ref{ineq}) is equivalent to $3n-2 <\sqrt{9n^{2}-4n+4}$ which holds for every $n> 0$.
Similarly, the left-hand side of (\ref{ineq}) is equivalent to $3n>\sqrt{9n^{2}-4n+4}$, which is true for $n>1$.
Consequently, (\ref{ineq}) holds for $n\ge 2$.
\end{proof}



    

One of the upper bounds for $\sigma_{t}(G)$ is based on the following improvement between arithmetic and geometric means, published in \cite{filipovski}.

\begin{lemma}\label{sedum}If $x$ and $y$ are strictly positive numbers, then
$$\sqrt{\frac{x}{y}}+\sqrt{\frac{y}{x}}\geq 2+\frac{(x-y)^{2}}{2(x^{2}+y^{2})}.$$
\end{lemma}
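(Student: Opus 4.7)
The plan is to reduce the inequality to a single-variable polynomial inequality by a convenient symmetric substitution. Note that if we set
$$
s \;=\; \frac{x}{y}+\frac{y}{x} \;=\; \frac{x^{2}+y^{2}}{xy},
$$
then by AM--GM we have $s\ge 2$, with equality iff $x=y$. Moreover,
$$
\left(\sqrt{\tfrac{x}{y}}+\sqrt{\tfrac{y}{x}}\right)^{\!2}=\frac{x}{y}+2+\frac{y}{x}=s+2,
$$
so the left-hand side of the claimed inequality equals $\sqrt{s+2}$. On the other hand, since $(x-y)^{2}=xy\,(s-2)$ and $x^{2}+y^{2}=s\cdot xy$, the ratio on the right-hand side simplifies to
$$
\frac{(x-y)^{2}}{2(x^{2}+y^{2})}=\frac{s-2}{2s}.
$$
Thus the inequality becomes
$$
\sqrt{s+2}\;\ge\;2+\frac{s-2}{2s},\qquad s\ge 2.
$$

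Now I would set $u:=s-2\ge 0$ so the target reads $\sqrt{u+4}\ge 2+\dfrac{u}{2(u+2)}$. Both sides are positive for $u\ge 0$, so squaring is an equivalent step. A short calculation gives
$$
(u+4)-\left(2+\frac{u}{2(u+2)}\right)^{\!2}
=u-\frac{2u}{u+2}-\frac{u^{2}}{4(u+2)^{2}}
=\frac{u^{2}}{u+2}-\frac{u^{2}}{4(u+2)^{2}}
=\frac{u^{2}\bigl(4(u+2)-1\bigr)}{4(u+2)^{2}}
=\frac{u^{2}(4u+7)}{4(u+2)^{2}}\;\ge\;0,
$$
which proves the squared (and hence the original) inequality, with equality exactly when $u=0$, i.e.\ $x=y$.

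There is no real obstacle here: the only step that requires a bit of care is verifying that the right-hand side $2+\frac{u}{2(u+2)}$ is nonnegative (which is obvious for $u\ge 0$) so that squaring is legitimate, and then keeping the algebra tidy when expanding the square. The rest is bookkeeping, and the substitution $s=x/y+y/x$ is the natural move because both sides of the inequality are homogeneous of degree $0$ in $(x,y)$ and symmetric in $x$ and $y$.
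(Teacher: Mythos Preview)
Your proof is correct. The substitution $s=x/y+y/x$ exploits the degree-$0$ homogeneity and symmetry of both sides, and the algebra after setting $u=s-2$ checks out: one indeed gets
\[
(u+4)-\Bigl(2+\tfrac{u}{2(u+2)}\Bigr)^{2}=\frac{u^{2}(4u+7)}{4(u+2)^{2}}\ge 0,
\]
with equality precisely when $u=0$, i.e.\ $x=y$.

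As for comparison with the paper: the paper does not actually prove this lemma. It is quoted from an external source (reference~\cite{filipovski}) and stated without proof, so there is no in-paper argument to compare against. Your self-contained derivation therefore adds value by making the result independent of that citation.
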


Since the sigma total index is equal to $n^{2}\cdot \rm{Var}(\emph{G})$, we use the following result by Bhatia and Davis \cite{bhatia}.

\begin{lemma} \label{osum} Let $a_{1}, a_{2}, \ldots, a_{n}$ be real numbers. Assume $M\geq a_{j}$ and $m\leq a_{j}$ for all $j$, and set $\overline{a}$ for the average: $\overline{a}=\frac{1}{n} \sum_{j=1}^{n} a_{j}.$ Then
$$\frac{1}{n} \sum_{j=1}^{n}(a_{j}-\overline{a})^{2} \leq (M-\overline{a})(\overline{a}-m).$$
Equality holds if and only if every $a_{j}$ is equal either to $M$ or to $m$.
\end{lemma}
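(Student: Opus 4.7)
The plan is to exploit the elementary fact that, for each $j$, the bracketing $m\le a_j\le M$ forces $(M-a_j)(a_j-m)\ge 0$. Expanding this inequality gives $a_j^2\le (M+m)a_j-Mm$, and summing over $j=1,\dots,n$ and dividing by $n$ yields
$$
\frac{1}{n}\sum_{j=1}^n a_j^2 \;\le\; (M+m)\overline{a}-Mm.
$$
This is the only nontrivial input of the argument.

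Next, I would use the standard variance identity
$$
\frac{1}{n}\sum_{j=1}^n (a_j-\overline{a})^2 \;=\; \frac{1}{n}\sum_{j=1}^n a_j^2 \;-\; \overline{a}^2,
$$
which is a direct expansion. Substituting the bound from the first step gives
$$
\frac{1}{n}\sum_{j=1}^n (a_j-\overline{a})^2 \;\le\; (M+m)\overline{a}-Mm-\overline{a}^2,
$$
and a routine factorization shows that the right-hand side equals $(M-\overline{a})(\overline{a}-m)$. This completes the inequality part.

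For the equality characterization, I would observe that the chain of inequalities is tight precisely when $(M-a_j)(a_j-m)=0$ for every $j$, which is the case if and only if each $a_j$ equals $M$ or $m$. Conversely, if every $a_j\in\{M,m\}$, then the pointwise inequalities used in the first step become equalities, so the final inequality is sharp as well.

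I do not foresee any real obstacle here: the proof reduces to a one-line pointwise inequality plus the variance identity, and the equality discussion is automatic from tracing back the one nontrivial estimate. The only thing to be careful about is the algebraic verification $(M+m)\overline{a}-Mm-\overline{a}^2=(M-\overline{a})(\overline{a}-m)$, which should be written out explicitly to keep the argument self-contained.
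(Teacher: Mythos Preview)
Your argument is correct and is exactly the classical Bhatia--Davis proof: the pointwise inequality $(M-a_j)(a_j-m)\ge 0$, summed and combined with the variance identity, followed by the factorization $(M+m)\overline{a}-Mm-\overline{a}^{2}=(M-\overline{a})(\overline{a}-m)$, with equality traced back to $(M-a_j)(a_j-m)=0$ for each $j$. The paper itself does not supply a proof of this lemma; it simply quotes the result from Bhatia and Davis \cite{bhatia}, so there is nothing further to compare.
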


\section{Graphs with maximum $\sigma_{t}$-irregularity}

For the sake of Theorem \ref{prvaa}, we recall the definition of split graphs.
The \emph{split graph} is a graph $S_{a,b}$ whose vertices can be partitioned into a clique of size $a$ and an independent set of vertices of size $b$, and every vertex of the independent set is 
connected to all vertices of the clique.

\begin{theorem}\label{prvaa} For a connected graph $\Gamma$ on $n\in \mathbb{N}, n\geq 3$ vertices it holds
$$
\sigma_{t}(\Gamma)\leq \left\{
\begin{array}{lc} \lceil \frac{n}{4}\rceil \cdot \lfloor \frac{3n}{4}\rfloor \left(n-1-\lceil \frac{n}{4}\rceil \right)^{2}  , \mbox { if\;} \; n \equiv 0 \bmod 4 \mbox { or\; } n \equiv 3 \bmod 4; 
\\
\lfloor \frac{n}{4}\rfloor \cdot \lceil \frac{3n}{4}\rceil \left(n-1-\lfloor \frac{n}{4}\rfloor \right)^{2} , \mbox { if\;}\; n \equiv 1 \bmod 4 \mbox { or\; } n \equiv 2 \bmod 4. \\
\end{array}
\right.
$$

If $n \equiv 0 \bmod 4$ or $ n \equiv 3 \bmod 4$ the equality holds for the split graph $S_{\lceil\frac{n}{4}\rceil, \lfloor\frac{3n}{4}\rfloor },$ while if $n \equiv 1 \bmod 4$ or $ n \equiv 2 
\bmod 4$ the equality holds for the split graph $S_{\lfloor\frac{n}{4}\rfloor, \lceil\frac{3n}{4}\rceil }.$
\end{theorem}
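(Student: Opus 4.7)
The approach combines the Bhatia--Davis bound on variance with an edge-counting argument that forces the extremizer to be a split graph, then reduces to a one-variable optimization that is resolved by Lemma \ref{šest}.

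\emph{Reduction to two-valued degree sequences.} By Lemma \ref{četiri} and the Bhatia--Davis inequality (Lemma \ref{osum}) applied to the degree sequence with $M=\Delta$, $m=\delta$, and $\bar{a}=\bar{d}$,
$$
\sigma_t(\Gamma) \;=\; n^2\,\mathrm{Var}(\Gamma) \;\le\; n^2(\Delta-\bar{d})(\bar{d}-\delta),
$$
with equality iff every degree equals $\Delta$ or $\delta$. Writing $a$ for the number of vertices of degree $\Delta$ and substituting $\bar{d}=(a\Delta+(n-a)\delta)/n$ collapses the right-hand side to $a(n-a)(\Delta-\delta)^2$. Hence one may assume the extremal $\Gamma$ has exactly two degree values, and its sigma total index equals $a(n-a)(\Delta-\delta)^2$.

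\emph{Edge counting forces a split graph.} Let $A$ (with $|A|=a$) and $B$ (with $|B|=n-a$) be the high- and low-degree classes, and let $e_A,\,e_B,\,e_{AB}$ denote the induced edge counts. From $a\Delta=2e_A+e_{AB}$ and $(n-a)\delta=2e_B+e_{AB}$, together with $e_A\le\binom{a}{2}$, $e_B\le\binom{n-a}{2}$, and $\Delta\le n-1$, a short algebraic manipulation yields
$$
\Delta-\delta \;\le\; n-1-a \qquad\text{when } a\le n/2,
$$
with equality iff $\Delta=n-1$, $A$ is a clique, and $B$ is independent --- that is, $\Gamma=S_{a,\,n-a}$. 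When $a>n/2$ the same counting applied to the complement $\overline{\Gamma}$, together with the complement-invariance $\sigma_t(G)=\sigma_t(\overline{G})$ (which follows from $d_{\overline{G}}(u)-d_{\overline{G}}(v)=-(d_G(u)-d_G(v))$), yields the symmetric estimate $\Delta-\delta\le a-1$, and hence $\sigma_t(\Gamma)\le a(n-a)(a-1)^2=f(n-a)$, where $f(a):=a(n-a)(n-1-a)^2$. In both regimes $\sigma_t(\Gamma)\le f(\min(a,\,n-a))$.

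\emph{Optimizing the one-variable function.} It remains to maximize $f(a)$ over $a\in\{1,\dots,\lfloor n/2\rfloor\}$. Treating $a$ as a continuous variable, $f'(a)=0$ has unique interior critical point
$$
a^\star \;=\; \frac{5n-2-\sqrt{9n^2-4n+4}}{8},
$$
and Lemma \ref{šest} locates the candidate integer maximizers at $\lceil n/4\rceil$ and $\lfloor n/4\rfloor$ (with the modification $\lfloor n/4\rfloor-1$ only when $4\mid n$, in which case $\lceil n/4\rceil=n/4$ is dominant). A short case analysis on $n\bmod 4$ compares the two candidate values of $f$ and selects the split graph named in the theorem, with the extremizer being $S_{\lceil n/4\rceil,\,\lfloor 3n/4\rfloor}$ when $n\equiv 0,3\pmod 4$ and $S_{\lfloor n/4\rfloor,\,\lceil 3n/4\rceil}$ otherwise.

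\emph{Main obstacle.} The delicate step is the edge-counting inequality $\Delta-\delta\le n-1-a$: it is purely algebraic once the double-counts are set up, but requires careful tracking of which inequality becomes equality (so that the split graph is singled out as the unique extremizer), and for $a>n/2$ it relies on the symmetry supplied by complement-invariance of $\sigma_t$.
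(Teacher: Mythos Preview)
There is a genuine gap at the first reduction step. You apply Bhatia--Davis with $M=\Delta$ and $m=\delta$ to obtain $\sigma_t(\Gamma)\le n^2(\Delta-\bar d)(\bar d-\delta)$, note that equality forces a two-valued degree sequence, and then conclude ``hence one may assume the extremal $\Gamma$ has exactly two degree values''. This inference is not valid: the Bhatia--Davis upper bound depends on the parameters $\Delta,\delta,\bar d$ of the \emph{particular} graph, so a graph with more than two degree values could in principle have a larger $\sigma_t$ than every two-valued graph, simply by having more favorable values of $\Delta,\delta,\bar d$. In the same vein, your substitution $\bar d=(a\Delta+(n-a)\delta)/n$ that ``collapses'' the right-hand side to $a(n-a)(\Delta-\delta)^2$ is only legitimate \emph{after} the graph is known to be two-valued; for a general graph the mean degree is not determined by $a,\Delta,\delta$. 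Since your edge-counting argument explicitly partitions $V(\Gamma)$ into $A$ (all of degree $\Delta$) and $B$ (all of degree $\delta$) with $|A|+|B|=n$, it cannot be invoked until this reduction is secured.

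The paper avoids this difficulty by a different first move: it uses the edge-switching operation of Lemma~\ref{pet} (applied along a shortest path, so that connectivity is preserved) to show that any extremal graph must already have $\Delta=n-1$. Once $\Delta=n-1$, every vertex is adjacent to each of the $x$ vertices of maximum degree, whence $\delta\ge x$ automatically; Bhatia--Davis is then applied with the \emph{fixed} values $M=n-1$ and $m=x$, which pins down the split graph $S_{x,n-x}$ as the equality case. Your edge-counting inequality $\Delta-\delta\le n-1-a$ is correct (and pleasant) for two-valued graphs, and in effect rediscovers $\delta\ge a$ once $\Delta=n-1$; but what your plan is missing is precisely the switching argument that forces $\Delta=n-1$ before any of this can begin. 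The remaining optimization over $a$ and the case analysis modulo $4$ match the paper.
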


\begin{proof} Let $G$ be a connected graph on $n\geq 3$ vertices with the maximum $\sigma_{t}$-irregularity among all connected graphs on $n$ vertices. Let $\Delta=d_{1}\geq d_{2}\geq \cdots \geq d_{n}=\delta$ be the degree sequence of $G$ and let $\overline{d}$ be its average degree. It holds $\delta\leq \overline{d} \leq \Delta.$ 
If $d_{1}\neq n-1$, then $v_{1}$ is not connected to at least one vertex $u$.
Let $v_{1}\cdots v_{j}u$ be a shortest path between $v_{1}$ and  $u$.

We form a new graph $G'$ by adding an edge between $v_{1}$ and $u$ and removing the edge $v_{j}u$.
Since $v_{1}$ and $u$ are not adjacent, it follows that $v_{1}\neq v_{j}$.
In this case the graph $G'$ remains connected.
By Lemma \ref{četiri} and Lemma \ref{pet}, we get $\sigma_{t}(G)=n^{2}\cdot \rm{Var}(\emph{G})<\emph{n}^{2}\cdot \rm{Var}(\emph{G}')=\sigma_\emph{t}(\emph{G}')$.

Since by increasing $d_{1}$ we increase the $\sigma_{t}$-irregularity, without loss of generality we may assume that $d_{1}=n-1.$ Moreover, let $x$  be the number of vertices of maximum degree $n-1$.
Since the remaining $n-x$ vertices are connected to each of these $x$ vertices, we obtain that the smallest possible degree of the graph $G$ is at least $x$. This yields $x\leq d_{i} \leq n-1.$ From Lemma 8 we get that 
\begin{equation}\label{maxx} 
\sigma_{t}(\Gamma)\leq \sigma_{t}(G)=n^{2}\cdot \rm{Var}(\emph{G})\leq \textit{n}^{2}(\textit{n}-1-\overline{\textit{d}})(\overline{\textit{d}}-\textit{x}).
\end{equation}
The equality in (\ref{maxx}) holds if every vertex degree $d_{i}$ is equal either to $n-1$ or $x$. Consequently we assume that $G$ contains $x$ vertices of degree $n-1$ and $n-x$ vertices of degree $x$. Hence, $G$ is a split graph $S_{x, n-x}$ where its vertices are partitioned into a clique of size $x$ and an independent set of size $n-x$.
Since the vertices in the clique are of the same degree, the pairs of these vertices do not contribute to the sigma total index.
The same observation holds for the vertices in the independent set. Thus, we count the number of edges between both sets, which is equal to $x(n-x)$.
We get
\begin{equation}\label{glavno}
\sigma_{t}(\Gamma)\leq \sigma_{t}(G)=\sigma_{t}(S_{x,n-x})=x(n-x)(n-1-x)^{2}.
\end{equation}
From (\ref{maxx}), we note that the largest value of $\sigma_{t}(G)$ is equal to
$$n^{2}(n-1-\overline{d})(\overline{d}-x)$$
$$=n^{2}\left(n-1- \frac{x(n-1)+(n-x)x}{n}\right)\left(\frac{x(n-1)+(n-x)x}{n}-x\right)$$
$$=x(n-x)(n-1-x)^{2},$$
which matches with the $\sigma_{t}(S_{x,n-x}).$

In order to maximize the function $\sigma_{t}(S_{x,n-x}),$ we find its critical points. We get
$$\frac{\partial \sigma_{t}}{\partial x}= -4x^{3}+3x^{2}(3n-2)-2x(3n^{2}-4n+1)+n^{3}-2n^{2}+n.$$
From $\frac{\partial \sigma_{t}}{\partial x}=0$ we get $x=n-1$ or $x=\frac{1}{8}(5n-2 \pm \sqrt{9n^{2}-4n+4}).$
It is easy to check that $n>\frac{1}{8}(5n-2 + \sqrt{9n^{2}-4n+4})\geq n-1$.
Using the first derivative we conclude that $\sigma_{t}$ is increasing on $\big(0, \frac{1}{8}(5n-2 - \sqrt{9n^{2}-4n+4})\big)$ and on $\big(n-1, \frac{1}{8}(5n-2 + \sqrt{9n^{2}-4n+4})\big),$ and 
decreasing on
the intervals $\big(\frac{1}{8}(5n-2 - \sqrt{9n^{2}-4n+4}), n-1\big)$ and $\big(\frac{1}{8}(5n-2 + \sqrt{9n^{2}-4n+4}), n\big)$.
If $x=n-1$, then $G$ is a complete graph, and then $\sigma_{t}(G)=0.$
Thus the largest possible value for $\sigma_{t}$ is achieved
for the split graphs $S_{x, n-x}$, where $x=\left\lfloor \frac{1}{8}(5n-2 - \sqrt{9n^{2}-4n+4})\right\rfloor$ or $x=\left\lceil \frac{1}{8}(5n-2 - \sqrt{9n^{2}-4n+4})\right\rceil$, and 
$\sigma_{t}(G)=x(n-x)(n-1-x)^{2}$.

In the following we compare both possible values for $\sigma_{t}$, choosing the larger one.
Let $G_{1}$ and $G_{2}$ be two split graphs that contain $x_{1}=\left\lfloor \frac{1}{8}(5n-2 - \sqrt{9n^{2}-4n+4})\right\rfloor$  and $x_{2}=\left\lceil \frac{1}{8}(5n-2 - 
\sqrt{9n^{2}-4n+4})\right\rceil$ vertices of degree $n-1$, respectively.
We apply Lemma \ref{šest} and consider the following cases:

\begin{itemize}
\item[1.]
Let $n=4k.$ Then $x_{1}=\lfloor \frac{n}{4}\rfloor -1=k-1$ and $x_{2}=\lceil \frac{n}{4}\rceil=k.$ We get
$$\sigma_{t}(G_{1})-\sigma_{t}(G_{2})= (k-1)\cdot (4k-k+1)\cdot (4k-1-(k-1))^{2}- k\cdot (4k-k)(4k-1-k)^{2}=-12k^{2}<0.$$
In this case we have $\sigma_{t}(G)\leq \sigma_{t}(G_{2})=\lceil \frac{n}{4}\rceil \cdot \lfloor \frac{3n}{4}\rfloor \left(n-1-\lceil \frac{n}{4}\rceil \right)^{2}.$

\item [2.]
Let $n=4k+1.$ Then $x_{1}=\lfloor \frac{n}{4}\rfloor=k$ and $x_{2}=\lceil \frac{n}{4}\rceil=k+1.$ We get
$$\sigma_{t}(G_{1})-\sigma_{t}(G_{2})=k\cdot (4k+1-k)\cdot (4k-k)^{2}-(k+1)\cdot (4k+1-k-1)(4k-k-1)^{2}=$$
$$= k\cdot (3k+1)\cdot 9k^{2}-(k+1)\cdot 3k \cdot (3k-1)^{2}=15k^{2}-3k>0.$$
In this case we have $\sigma_{t}(G)\leq \sigma_{t}(G_{1})=\lfloor \frac{n}{4}\rfloor \cdot \lceil \frac{3n}{4}\rceil \left(n-1-\lfloor \frac{n}{4}\rfloor \right)^{2}.$

\item [3.]
Let $n=4k+2.$ Then $x_{1}=\lfloor \frac{n}{4}\rfloor=k$ and $x_{2}=\lceil \frac{n}{4}\rceil=k+1.$ We get
$$\sigma_{t}(G_{1})-\sigma_{t}(G_{2})= k\cdot (4k+2-k)\cdot (4k+1-k)^{2}- (k+1)\cdot (4k+2-k-1)(4k+1-k-1)^{2}=$$
$$= k\cdot (3k+2)\cdot (3k+1)^{2}-(k+1)\cdot (3k+1) \cdot (3k)^{2}=6k^{2}+2k>0.$$
In this case we have $\sigma_{t}(G)\leq \sigma_{t}(G_{1})=\lfloor \frac{n}{4}\rfloor \cdot \lceil \frac{3n}{4}\rceil \left(n-1-\lfloor \frac{n}{4}\rfloor \right)^{2}.$

\item [4.]
Let $n=4k+3.$ Then $x_{1}=\lfloor \frac{n}{4}\rfloor=k$ and $x_{2}=\lceil \frac{n}{4}\rceil=k+1.$ We get
$$\sigma_{t}(G_{1})-\sigma_{t}(G_{2})= k\cdot (4k+3-k)\cdot (4k+2-k)^{2}-(k+1)\cdot (4k+3-k-1)(4k+2-k-1)^{2}=$$
$$=k\cdot (3k+3)\cdot (3k+2)^{2}- (k+1)\cdot (3k+2) \cdot (3k+1)^{2}=-3k^{2}-5k-2<0.$$
Similarly as in the first case, we have $\sigma_{t}(G)\leq \sigma_{t}(G_{2})=\lceil \frac{n}{4}\rceil \cdot \lfloor \frac{3n}{4}\rfloor \left(n-1-\lceil \frac{n}{4}\rceil \right)^{2}.$
\end{itemize}
This case analysis completes the proof.


\end{proof}

\subsection{Graphs without triangles}

In this subsection we consider triangle-free graphs. First, we give an upper bound for $\sigma_{t}(G)$ in terms of $m$ and $n$.

\begin{proposition}\label{vtoraa} Let $G$ be a triangle-free graph on $n$ vertices and with $m$ edges.
Then
$$
\sigma_{t}(G)\leq m(n^{2}-4m).
$$
The equality holds if and only if $G$ is a complete bipartite graph.
\end{proposition}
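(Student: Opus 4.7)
\emph{Strategy.} The plan is to reduce the bound on $\sigma_t$ to a bound on the first Zagreb index and then exploit a standard triangle-free estimate on pairs of adjacent degrees. By Proposition \ref{eden}, $\sigma_t(G) = nM_1(G) - 4m^2$, so the inequality $\sigma_t(G) \leq m(n^2 - 4m)$ is equivalent to the compact statement $M_1(G) \leq mn$. Rewriting the first Zagreb index as a sum over edges gives
\[
M_1(G) = \sum_{v \in V(G)} d(v)^2 = \sum_{uv \in E(G)} \bigl(d(u) + d(v)\bigr).
\]
For any edge $uv$ in a triangle-free graph, the neighborhoods $N(u)$ and $N(v)$ are disjoint (a common neighbor would close a triangle) and both lie in $V(G)$, so $d(u) + d(v) = |N(u) \cup N(v)| \leq n$. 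Summing over all $m$ edges gives $M_1(G) \leq mn$, which yields the claimed bound.

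\emph{Equality analysis.} Equality forces $d(u) + d(v) = n$ for every edge $uv$. Fix such an edge $uv$ and set $A = N(u)$, $B = N(v)$. Then $A \cap B = \emptyset$ and $|A| + |B| = n$, hence $A \cup B = V(G)$. Triangle-freeness rules out edges inside $A$ (an edge $xy$ inside $A$ together with $u$ would form a triangle), and similarly inside $B$, so $G$ is bipartite with parts $A$ and $B$. Applying the equality condition to an arbitrary $x \in A$ and its neighbor $u$ gives $d(x) = n - |A| = |B|$; since the $|B|$ neighbors of $x$ all lie inside $B$, vertex $x$ is adjacent to every vertex of $B$. Thus $G \cong K_{|A|,|B|}$. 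The converse is immediate: in $K_{a,b}$ every edge has endpoints of degrees $a$ and $b$ summing to $a+b=n$.

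\emph{Main obstacle.} The inequality is essentially a one-line consequence of the edge-sum representation of $M_1(G)$ combined with the triangle-free neighborhood bound, so there is no analytic obstacle. The only careful step is the equality characterization, where one must verify that saturating $d(u)+d(v)=n$ on \emph{every} edge—not just one—forces full completeness between the two sides, rather than leaving some bipartite graph with the right degree-sum property on each edge.
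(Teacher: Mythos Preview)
Your proof is correct and follows essentially the same route as the paper: both reduce the claim via Proposition~\ref{eden} to the inequality $M_1(G)\le mn$ for triangle-free graphs, with equality precisely for complete bipartite graphs. The only difference is that the paper cites this Zagreb-index bound from Zhou~\cite{zhou}, whereas you supply the standard self-contained proof (edge-sum form of $M_1$ plus disjoint neighborhoods) and work out the equality case explicitly.
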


\begin{proof}
The proof is straightforward. For triangle-free graphs it holds $M_{1}(G)\leq mn,$ and the equality holds if and only if $G$ is a complete bipartite graph, see \cite{zhou}.
 This inequality and Proposition \ref{eden} yield
$$\sigma_{t}(G)=nM_{1}(G)-4m^{2}\leq mn^2-4m^{2}=m(n^{2}-4m).$$
\end{proof}

For $2\leq n\leq 9$, we find that the complete bipartite graphs $K_{1, n-1}$ have the maximum sigma total index in triangle-free graphs.
For $n=10$, two graphs produce the maximum $\sigma_{t}$, namely $K_{1,9}$ and $K_{2,8}$, while for $n=11$, it is $K_{2,9}$.
We believe that in the class of triangle-free graphs, the maximum sigma total index is achieved by complete bipartite graphs.
The next statement characterizes which complete bipartite graphs yield the maximum sigma total index.

\begin{proposition}\label{tretaa}
Among complete bipartite graphs on $n\geq 7$ vertices,  either the complete bipartite graph $K_{\lfloor\frac{n}{4}(2-\sqrt{2})\rfloor, \lceil\frac{n}{4}(2+\sqrt{2})\rceil }$ or 
$K_{\lceil\frac{n}{4}(2-\sqrt{2})\rceil, \lfloor\frac{n}{4}(2+\sqrt{2})\rfloor}$ has the maximum $\sigma_{t}$-irregularity. 
\end{proposition}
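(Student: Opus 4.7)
The plan is to reduce the problem to a one-variable optimization, identify the real maximizer, and then pin down the integer maximizer using unimodality.

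First, by Proposition \ref{eden} together with a direct computation for $K_{a,n-a}$, one has $M_{1}(K_{a,n-a}) = a(n-a)^{2} + (n-a)a^{2} = a(n-a)n$ and $m = a(n-a)$, so that
$$\sigma_{t}(K_{a,n-a}) = n \cdot a(n-a) \cdot n - 4 a^{2}(n-a)^{2} = a(n-a)\bigl(n^{2} - 4a(n-a)\bigr).$$
Setting $u = a(n-a)$, this becomes the quadratic $u(n^{2} - 4u)$, which is strictly concave in $u$ and attains its maximum at $u^{\ast} = n^{2}/8$.

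Next, observe that $a \mapsto a(n-a)$ is strictly increasing on $[0, n/2]$, strictly decreasing on $[n/2, n]$, and symmetric about $n/2$. Composing with the concave parabola in $u$ shows that the real-valued map $a \mapsto \sigma_{t}(K_{a,n-a})$ is strictly unimodal on each of the intervals $[0, n/2]$ and $[n/2, n]$, with peaks exactly at the two roots of $a(n-a) = n^{2}/8$, namely $a = \tfrac{n(2-\sqrt{2})}{4}$ and $a = \tfrac{n(2+\sqrt{2})}{4}$.

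By the symmetry $K_{a,n-a} \cong K_{n-a,a}$ it suffices to search over integer $a$ in $[1, n/2]$. Since $\sqrt{2}$ is irrational, $\tfrac{n(2-\sqrt{2})}{4}$ is not an integer, and strict unimodality on $[1, n/2]$ forces the integer maximizer to be one of $\lfloor \tfrac{n(2-\sqrt{2})}{4}\rfloor$ and $\lceil \tfrac{n(2-\sqrt{2})}{4}\rceil$. The complementary part has size $\lceil \tfrac{n(2+\sqrt{2})}{4}\rceil$ and $\lfloor \tfrac{n(2+\sqrt{2})}{4}\rfloor$ respectively (using that $\lfloor \alpha\rfloor + \lceil n-\alpha\rceil = n$ whenever $\alpha\notin\mathbb{Z}$), so the two candidates are precisely the graphs in the statement. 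The hypothesis $n \geq 7$ is exactly what is required to ensure $\lfloor \tfrac{n(2-\sqrt{2})}{4}\rfloor \geq 1$, so that both candidates are proper complete bipartite graphs with nonempty parts.

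The only bookkeeping step I do not carry out is to decide which of the two candidates actually attains the maximum; this would require a case analysis on $n$ modulo a small period in the spirit of the four-case analysis in Theorem \ref{prvaa}, and I expect this to be the main routine obstacle. However, since the statement asserts only that one of the two candidates is optimal, this comparison is unnecessary, and the unimodality argument already completes the proof.
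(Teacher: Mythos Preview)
Your proof is correct and follows essentially the same approach as the paper: express $\sigma_t(K_{a,n-a})$ as a function of the smaller part size, locate the real maximizer at $a=\frac{n}{4}(2-\sqrt{2})$, and conclude by unimodality that the integer maximizer is the floor or ceiling. Your substitution $u=a(n-a)$, reducing the problem to a concave quadratic, is a bit cleaner than the paper's direct differentiation of the quartic, and you supply two justifications the paper leaves implicit (irrationality of the critical point and the role of the hypothesis $n\geq 7$).
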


\begin{proof}
Let $G$ be a complete bipartite graph with partite sets $V_{1}$ and $V_{2}.$ Also, let us suppose that the cardinalities of the partition sets $V_{1}$ and $V_{2}$ are $n_{1}$ and $n_{2}$, respectively, 
and let $n_{1}\leq n_{2}$.
We have
%
$$
\sigma_{t}(G)=n_{1}n_{2}(n_{1}-n_{2})^{2}=n_{1}(n-n_{1})(2n_{1}-n)^{2}=-4n_{1}^{4}+8nn_{1}^{3}-5n^{2}n_{1}^{2}+n^{3}n_{1}.
$$
We maximize the expression $-4n_{1}^{4}+8nn_{1}^{3}-5n^{2}n_{1}^{2}+n^{3}n_{1}$.
We have
$$
\frac{ \partial \sigma_{t}(G)}{\partial n_{1}}=-16n_{1}^{3}+24nn_{1}^{2}-10n^{2}n_{1}+n^{3}.
$$
From $\frac{\partial \sigma_{t}(G)}{\partial n_{1}}=0$ we get $n_{1}=\frac{n}{2}$, or $n_{1}=\frac{n}{4}(2\pm\sqrt{2})$. 
Note that $\frac{n}{4}(2-\sqrt{2})<\frac{n}{2}<\frac{n}{4}(2+\sqrt{2})<n$.
Based on the first derivative we conclude that $\sigma_{t}$ is increasing on the intervals $\big(0, \frac{n}{4}(2-\sqrt{2})\big)$ and $\big(\frac{n}{2}, \frac{n}{4}(2+\sqrt{2})\big)$ and it is 
decreasing on $\big(\frac{n}{4}(2-\sqrt{2}), \frac{n}{2}\big)$ and $\big(\frac{n}{4}(2+\sqrt{2}), n\big)$.
Since $n_{1}\leq \frac{n}{2},$ the maximum $\sigma_{t}(G)$ is achieved for $n_{1}=\left\lfloor\frac{n}{4}(2-\sqrt{2})\right\rfloor$ or $n_{1}=\left\lceil\frac{n}{4}(2-\sqrt{2})\right\rceil$.

If $n_{1}=\left\lfloor\frac{n}{4}(2-\sqrt{2})\right\rfloor$, then $n_{2}=\left\lceil\frac{n}{4}(2+\sqrt{2})\right\rceil$, and 
\begin{equation}\label{sig}
\sigma_{t}(G)=\left\lfloor\frac{n}{4}(2-\sqrt{2})\right\rfloor \cdot \left\lceil\frac{n}{4}(2+\sqrt{2})\right\rceil \cdot 
\left(\left\lceil\frac{n}{4}(2+\sqrt{2})\right\rceil-\left\lfloor\frac{n}{4}(2-\sqrt{2})\right\rfloor\right)^{2}.
\end{equation}
Similarly, if $n_{1}=\left\lceil\frac{n}{4}(2-\sqrt{2})\right\rceil$, then $n_{2}=\left\lfloor\frac{n}{4}(2+\sqrt{2})\right\rfloor$, and 
\begin{equation}\label{sig1}
\sigma_{t}(G)=\left\lceil\frac{n}{4}(2-\sqrt{2})\right\rceil\cdot \left\lfloor\frac{n}{4}(2+\sqrt{2})\right\rfloor \cdot 
\left(\left\lfloor\frac{n}{4}(2+\sqrt{2})\right\rfloor-\left\lceil\frac{n}{4}(2-\sqrt{2})\right\rceil\right)^{2}.
\end{equation}
\end{proof}

\begin{Remark} Note that the minimum $\sigma_{t}(G)$ is achieved for $n_{1}=\lfloor \frac{n}{2}\rfloor$. If $n$ is even, then $n_{1}=n_{2}=\frac{n}{2}$ and $\sigma_{t}(G)=0.$ If $n$ is odd, then 
$n_{1}=\left\lfloor \frac{n}{2}\right\rfloor$, $n_{2}=\left\lceil \frac{n}{2}\right\rceil$ and 
\begin{equation}\label{prvi}
\sigma_{t}(G)=\left\lfloor \frac{n}{2}\right\rfloor \cdot \left\lceil \frac{n}{2}\right\rceil \cdot \left(\left\lceil \frac{n}{2}\right\rceil-\left\lfloor 
\frac{n}{2}\right\rfloor\right)^{2}=\left\lfloor \frac{n}{2}\right\rfloor \cdot \left\lceil \frac{n}{2}\right\rceil.
\end{equation}
From $n>\left\lceil\frac{n}{4}(2+\sqrt{2})\right\rceil\geq \left\lfloor\frac{n}{4}(2+\sqrt{2})\right\rfloor\geq \left\lfloor \frac{3n}{4}\right\rfloor$ and 
$\left\lfloor\frac{n}{4}(2-\sqrt{2})\right\rfloor\leq \left\lceil\frac{n}{4}(2-\sqrt{2})\right\rceil\leq \left\lceil \frac{n}{4}\right\rceil$ we obtain that the bounds in (\ref{sig}) and 
(\ref{sig1}) are bigger than the bound in (\ref{prvi}).
\end{Remark}

The problem of finding which graph from $K_{\lfloor\frac{n}{4}(2-\sqrt{2})\rfloor, \lceil\frac{n}{4}(2+\sqrt{2})\rceil }$ and $K_{\lceil\frac{n}{4}(2-\sqrt{2})\rceil, 
\lfloor\frac{n}{4}(2+\sqrt{2})\rfloor}$ has a bigger $\sigma_{t}$-irregularity we leave for possible future work.



\subsection{Application in graph energy}

Next we give an upper bound for $\sigma_{t}$ in terms of $m,n, \Delta$ and $\delta$.

\begin{proposition}
\label{četvrtaa} 
Let $G$ be a connected graph on $n$ vertices and $m$ edges. Let $\Delta$ and $\delta$ be the maximum and the minimum degree of $G$, respectively. Then
$$\sigma_{t}(G)\leq \frac{4(\sqrt{2mn}-n\sqrt{\delta})(n^{2}\Delta^{2}+4m^{2})}{n\sqrt{\delta}}.$$
\end{proposition}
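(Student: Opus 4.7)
The plan is to apply Lemma \ref{sedum} pointwise to each squared degree difference and then sum, closing the estimate with two one-line inequalities on the resulting symmetric functions of the degrees. Since $G$ is connected and $n\ge 2$, every vertex satisfies $d_i\ge 1$, so Lemma \ref{sedum} with $x=d_i$ and $y=d_j$ (after multiplying by $(\sqrt{d_i}-\sqrt{d_j})^{2}$ or, equivalently, clearing the denominator in the AM/GM improvement) yields
$$
(d_i-d_j)^2\;\le\;2(d_i^2+d_j^2)\!\left(\sqrt{\tfrac{d_i}{d_j}}+\sqrt{\tfrac{d_j}{d_i}}-2\right)
$$
for every pair $i<j$. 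The first decisive simplification is the uniform bound $d_i^2+d_j^2\le 2\Delta^2$, which strips the quadratic factor out of the sum and leaves only the ratio $\sqrt{d_i/d_j}+\sqrt{d_j/d_i}-2$ inside.

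Summing over $i<j$, I would use the identity $\sum_{i\ne j}\sqrt{d_i/d_j}=\bigl(\sum_i\sqrt{d_i}\bigr)\bigl(\sum_j 1/\sqrt{d_j}\bigr)-n$ (subtracting the $n$ diagonal terms, each equal to $1$) together with $\sum_{i<j}2=n(n-1)$ to obtain
$$
\sigma_t(G)\;\le\;4\Delta^2\!\left[\Bigl(\sum_{i=1}^{n}\sqrt{d_i}\Bigr)\!\Bigl(\sum_{i=1}^{n}\tfrac{1}{\sqrt{d_i}}\Bigr)-n^2\right].
$$
Next I would estimate the two vertex sums separately: by Cauchy--Schwarz, $\sum_i\sqrt{d_i}\le\sqrt{n\sum_i d_i}=\sqrt{2mn}$, and the termwise bound $1/\sqrt{d_i}\le 1/\sqrt{\delta}$ gives $\sum_i 1/\sqrt{d_i}\le n/\sqrt{\delta}$. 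Inserting these produces
$$
\sigma_t(G)\;\le\;\frac{4n\Delta^2\bigl(\sqrt{2mn}-n\sqrt{\delta}\bigr)}{\sqrt{\delta}}\;=\;\frac{4n^{2}\Delta^{2}\bigl(\sqrt{2mn}-n\sqrt{\delta}\bigr)}{n\sqrt{\delta}},
$$
and the stated inequality now follows from the trivial enlargement $n^2\Delta^2\le n^2\Delta^2+4m^2$ in the numerator.

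The main obstacle I expect is picking the right level at which to invoke Lemma \ref{sedum}. Applying it to aggregate quantities such as $x=n\Delta$ and $y=2m$ (which naturally produces the factor $n^{2}\Delta^{2}+4m^{2}$) generates the wrong gap $\sqrt{n\Delta}-\sqrt{2m}$, not the desired $\sqrt{2mn}-n\sqrt{\delta}$; applying it inside the sum without first peeling off $d_i^2+d_j^2\le 2\Delta^{2}$ leaves an unmanageable expression. The pointwise application followed by the factorisation of $\sum_{i\ne j}\sqrt{d_i/d_j}$ is what cleanly separates the $\Delta$-dependence (coming from the uniform bound on $d_i^{2}+d_j^{2}$) from the $m,n,\delta$-dependence (coming from Cauchy--Schwarz and the minimum-degree estimate on the reciprocal sum).
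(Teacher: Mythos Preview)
Your argument is correct and in fact proves the slightly stronger inequality
\[
\sigma_t(G)\le \frac{4n^{2}\Delta^{2}\bigl(\sqrt{2mn}-n\sqrt{\delta}\bigr)}{n\sqrt{\delta}},
\]
from which the stated bound follows by the trivial enlargement $n^{2}\Delta^{2}\le n^{2}\Delta^{2}+4m^{2}$ (legitimate since $\sqrt{2mn}-n\sqrt{\delta}\ge 0$, as $2m\ge n\delta$). The parenthetical about ``multiplying by $(\sqrt{d_i}-\sqrt{d_j})^{2}$'' is a little garbled, but the inequality you actually use, namely $(x-y)^{2}\le 2(x^{2}+y^{2})\bigl(\sqrt{x/y}+\sqrt{y/x}-2\bigr)$, is a direct rearrangement of Lemma~\ref{sedum}.

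The paper takes a genuinely different route: it applies Lemma~\ref{sedum} once per vertex with $x=d_i/(2m)$ and $y=1/n$, so that $(x-y)^{2}$ produces $(nd_i-2m)^{2}$ and the denominator $2(x^{2}+y^{2})$ produces $n^{2}d_i^{2}+4m^{2}$; after summing and using $\sum d_i=2m$, the left side collapses to $2$, and bounding $d_i\le\Delta$ in the denominator and $\sqrt{d_i}\ge\sqrt{\delta}$ elsewhere yields the stated inequality directly, with the factor $n^{2}\Delta^{2}+4m^{2}$ arising organically rather than by a final weakening. Your pairwise application of Lemma~\ref{sedum} is arguably cleaner for $\sigma_t$ alone and gives a sharper constant; on the other hand, the paper's version keeps $\sum_i\sqrt{d_i}$ explicit on the right-hand side before estimating, which is exactly what is needed to feed the bound $\mathcal{E}(G)\le\sum_i\sqrt{d_i}$ and obtain Corollary~\ref{petaa}. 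In your approach $\sum_i\sqrt{d_i}$ ends up on the lower side of the inequality after isolating it, so the energy corollary does not fall out for free.
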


\begin{proof}
Let $d_{1}\geq \cdots \geq d_{n}$ be the degree sequence of $G$.
Assuming $x=\frac{d_{i}}{2m}$ and $y=\frac{1}{n}$ in Lemma \ref{sedum} we get
\begin{equation}\label{imp}
\frac{d_{i}}{2m}+\frac{1}{n}\geq \left(2+\frac{(nd_{i}-2m)^{2}}{2(n^{2}d_{i}^{2}+4m^{2})}\right)\sqrt{\frac{d_{i}}{2mn}}.
\end{equation}
Now, setting $i=1, \ldots, n$ in (\ref{imp}) and summing up the above inequalities, we have
\begin{equation}\label{en}
\frac{1}{2m} \sum_{i=1}^{n}d_{i}+\sum_{i=1}^{n}\frac{1}{n}\geq \frac{2}{\sqrt{2mn}}\sum_{i=1}^{n}\sqrt{d_{i}}+\sqrt{\frac{\delta}{2mn}}\cdot \frac{1}{2(n^{2}\Delta^{2}+4m^{2})}\cdot 
\sum_{i=1}^{n}(nd_{i}-2m)^{2},
\end{equation}
which is equivalent to
\begin{equation}\label{in}2\geq \frac{2}{\sqrt{2mn}}\sum_{i=1}^{n}\sqrt{d_{i}}+\sqrt{\frac{\delta}{2mn}}\cdot 
\frac{n^{2}}{2(n^{2}\Delta^{2}+4m^{2})}\sum_{i=1}^{n}\left(d_{i}-\frac{2m}{n}\right)^{2}.
\end{equation}
The desired inequality comes from the relations $\sum_{i=1}^{n}\left(d_{i}-\frac{2m}{n}\right)^{2}=\frac{\sigma_{t}(G)}{n}$ and $\sum_{i=1}^{n}\sqrt{d_{i}}\geq n\sqrt{\delta}$.
\end{proof}

Let $\lambda_{1}, \ldots, \lambda_{n}$ be the eigenvalues of $G$.
The energy of a graph, defined as $\mathcal{E}(G)=|\lambda_{1}|+\cdots+|\lambda_{n}|$, represents an important concept in mathematical chemistry. 
It is known that $\mathcal{E}(G)\leq \sqrt{2mn}$, see \cite{mc}, and $\mathcal{E}(G)\leq \sum_{i=1}^{n}\sqrt{d_{i}},$ see \cite{ariz}.
From (\ref{in}) we are in a position to obtain a new upper bound for the graph energy.

\begin{corollary}
\label{petaa}
Let $G$ be a connected graph on $n$ vertices, $m$ edges, with a minimum degree $\delta$ and a maximum degree $\Delta.$ Then
\begin{equation}
\label{energy}
\mathcal{E}(G)\leq \sqrt{2mn}-\frac{n\sqrt{\delta}\sigma_{t}(G)}{4(n^{2}\Delta^{2}+4m^{2})}.
\end{equation}
\end{corollary}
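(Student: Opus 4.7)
The plan is to read off the corollary directly from inequality (\ref{in}), which was established in the proof of Proposition \ref{četvrtaa} and is available before the specialization $\sum_i\sqrt{d_i}\ge n\sqrt{\delta}$ was used. The point of Corollary \ref{petaa} is to exploit (\ref{in}) in a different direction: instead of discarding $\sum_i\sqrt{d_i}$ by the lower bound $n\sqrt{\delta}$, we keep it and compare it with the graph energy through the known inequality $\mathcal{E}(G)\le\sum_{i=1}^n\sqrt{d_i}$ of \cite{ariz}.

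Concretely, I first substitute $\sum_{i=1}^{n}(d_i-2m/n)^2=\sigma_t(G)/n$ (which is just Lemma \ref{četiri} rewritten) into (\ref{in}) to obtain
$$
2\ \ge\ \frac{2}{\sqrt{2mn}}\sum_{i=1}^{n}\sqrt{d_{i}}\ +\ \sqrt{\frac{\delta}{2mn}}\cdot\frac{n\,\sigma_t(G)}{2(n^{2}\Delta^{2}+4m^{2})}.
$$
Next, I multiply both sides by $\sqrt{2mn}/2$ and simplify the coefficient of $\sigma_t(G)$ using $\sqrt{\delta/(2mn)}\cdot\sqrt{2mn}/2=\sqrt{\delta}/2$, which yields
$$
\sqrt{2mn}\ \ge\ \sum_{i=1}^{n}\sqrt{d_{i}}\ +\ \frac{n\sqrt{\delta}\,\sigma_t(G)}{4(n^{2}\Delta^{2}+4m^{2})}.
$$
Finally, I invoke $\mathcal{E}(G)\le\sum_{i=1}^{n}\sqrt{d_i}$ from \cite{ariz} to replace the sum on the right by $\mathcal{E}(G)$, and then rearrange to obtain the claimed bound (\ref{energy}).

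There is no genuine obstacle here: the corollary is essentially a repackaging of the computation already carried out for Proposition \ref{četvrtaa}, stopped one step earlier and combined with a standard energy estimate. The only thing to watch is the algebraic simplification of the $\sqrt{\delta/(2mn)}$ factor after multiplying by $\sqrt{2mn}/2$, to make sure the constant $4$ and the factor $n\sqrt{\delta}$ in the numerator come out exactly as in (\ref{energy}). Since $G$ is connected we have $\delta\ge 1$ and $m\ge 1$, so no division-by-zero or root-of-negative issue arises, and the inequality $\mathcal{E}(G)\le\sum_{i=1}^n\sqrt{d_i}$ applies in full generality.
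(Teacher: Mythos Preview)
Your proposal is correct and follows exactly the approach indicated in the paper: start from inequality (\ref{in}), substitute $\sum_i(d_i-2m/n)^2=\sigma_t(G)/n$, multiply through by $\sqrt{2mn}/2$, and then apply $\mathcal{E}(G)\le\sum_i\sqrt{d_i}$ from \cite{ariz}. The algebra checks out and matches (\ref{energy}) exactly.
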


\begin{Remark} The bound in (\ref{energy}) represents an improvement of the well-known McClelland upper bound $\mathcal{E}(G)\leq \sqrt{2mn}$.
From this relation we observe that among all connected graphs with a fixed number of vertices and edges, and with a fixed minimum and maximum degree, the graphs with a larger $\sigma_{t}$-irregularity have a smaller energy, and vice versa.
\end{Remark}

\section{Irregular graphs with minimal $\sigma_{t}$-irregularity}

The first result in this section presents a lower bound for $\sigma_{t}$ in terms of $n, m$ and the largest degree $\Delta$.

\begin{theorem}
\label{a1}
Let $G$ be a connected graph on $n$ vertices and $m$ edges and let $k (<n)$ vertices be of maximum degree $\Delta$. Then
$$\sigma_{t}(G)\geq \frac{k}{n-k}\cdot (n\Delta-2m)^{2}.$$
The equality holds if and only if $G$ is a graph where $k$ vertices have maximum degree $\Delta$ and $n-k$ vertices have degree $\delta=\frac{2m-k\Delta}{n-k}.$
\end{theorem}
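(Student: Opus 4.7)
The plan is to apply the identity $\sigma_{t}(G)=n\sum_{i=1}^{n}(d_{i}-\overline{d})^{2}$ (which follows directly from Lemma~\ref{četiri} and the definition of $\operatorname{Var}(G)$) and then split the sum according to whether $d_{i}=\Delta$ or not.  Let $d_{1}=\cdots=d_{k}=\Delta$ and denote by $d_{k+1},\ldots,d_{n}$ the remaining degrees, whose sum is $2m-k\Delta$. With $\overline{d}=2m/n$, the contribution of the first $k$ terms is computed exactly: $k(\Delta-\overline{d})^{2}=k(n\Delta-2m)^{2}/n^{2}$.

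For the remaining $n-k$ terms I would bound the sum of squared deviations from $\overline{d}$ using the Cauchy--Schwarz inequality (equivalently, the QM--AM inequality on the $n-k$ numbers $d_{i}-\overline{d}$):
$$
\sum_{i=k+1}^{n}(d_{i}-\overline{d})^{2}\;\ge\;\frac{1}{n-k}\!\left(\sum_{i=k+1}^{n}(d_{i}-\overline{d})\right)^{\!2}.
$$
A short computation gives $\sum_{i=k+1}^{n}(d_{i}-\overline{d})=(2m-k\Delta)-(n-k)\frac{2m}{n}=-\frac{k(n\Delta-2m)}{n}$, so the right-hand side equals $\frac{k^{2}(n\Delta-2m)^{2}}{n^{2}(n-k)}$.

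Putting the two contributions together and multiplying by $n$, the bound collapses cleanly:
$$
\sigma_{t}(G)\;\ge\;n\!\left[\frac{k(n\Delta-2m)^{2}}{n^{2}}+\frac{k^{2}(n\Delta-2m)^{2}}{n^{2}(n-k)}\right]=\frac{(n\Delta-2m)^{2}}{n}\cdot\frac{k(n-k)+k^{2}}{n-k}=\frac{k(n\Delta-2m)^{2}}{n-k},
$$
which is the desired inequality. For the equality case, note that the only inequality used is Cauchy--Schwarz on the last $n-k$ deviations, and it holds with equality iff $d_{k+1}=\cdots=d_{n}$; since their sum is $2m-k\Delta$, this common value must be $(2m-k\Delta)/(n-k)$, matching the stated characterization. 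There is no real obstacle here—the only thing to watch is the algebraic identity $k(n-k)+k^{2}=kn$, which is what makes the two contributions combine into a single clean closed-form bound.
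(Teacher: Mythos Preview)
Your proof is correct and follows essentially the same route as the paper: both arguments reduce to the QM--AM (Cauchy--Schwarz) inequality on the $n-k$ non-maximal degrees, with equality precisely when $d_{k+1}=\cdots=d_{n}$. The only cosmetic difference is that the paper works with $M_{1}(G)$ and the identity $\sigma_{t}=nM_{1}-4m^{2}$, whereas you work directly with the variance form $\sigma_{t}=n\sum_{i}(d_{i}-\overline{d})^{2}$ from Lemma~\ref{četiri}; the underlying inequality and the equality analysis are identical.
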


\begin{proof}
We begin by proving the inequality:
\begin{equation}\label{ner} k\Delta^{2}+d_{k+1}^{2}+\cdots+d_{n}^{2}\geq \frac{(k\Delta+d_{k+1}+\cdots+d_{n})^{2}}{n}+k\cdot\frac{((n-k)\Delta-(d_{k+1}+d_{k+2}+\cdots+d_{n}))^{2}}{n(n-k)}.
\end{equation}
The argument proceeds as follows.
If we let $S=d_{k+1}+d_{k+2}+\cdots+d_{n}$, the inequality (\ref{ner}) is equivalent to
$$n(n-k)(k\Delta ^{2}+d_{k+1}^{2}+\cdots+d_{n}^{2})\geq(n-k)(k\Delta+S)^{2}+k((n-k)\Delta-S)^{2}$$
which in turn is equivalent to
\begin{equation}
\label{inequ}
(n-k)(d_{k+1}^{2}+\cdots+d_{n}^{2})\geq S^{2}.
\end{equation}
Dividing (\ref{inequ}) by $n-k$ yields the equivalent inequality
$$d_{k+1}^{2}+d_{k+2}^{2}+\cdots+d_{n}^{2}\geq \frac{(d_{k+1}+d_{k+2}+\cdots+d_{n})^{2}}{n-k}$$
which holds because of the inequality between the quadratic and arithmetic means of the numbers $d_{k+1}, d_{k+2},\ldots,d_{n}$. Consequently, (\ref{ner}) holds as well.
Now, the inequality in (\ref{ner}) is equivalent to
$$M_{1}(G)=k\Delta^{2}+d_{k+1}^{2}+\cdots+d_{n}^{2}\geq \frac{4m^{2}}{n}+k\cdot \frac{(n\Delta-2m)^{2}}{n(n-k)}.$$
Hence 
\begin{equation}
\label{final}
\sigma_{t}(G)=nM_{1}(G)-4m^{2}\geq \frac{k}{n-k} \cdot (n\Delta-2m)^{2}.
\end{equation}
\end{proof}

As a consequence of the above theorem we get the following result.

\begin{corollary}
\label{a2}
Let $G$ be a connected graph on $n$ vertices and $m$ edges and let $\Delta$ be its maximum degree. 
Then
$$
\sigma_{t}(G)\geq \frac{(n\Delta-2m)^{2}}{n-1}.
$$
The equality holds if and only if $G$ is a graph with one vertex of maximum degree $\Delta$ and the remaining vertices of degree $\delta=\frac{2m-\Delta}{n-1}.$
\end{corollary}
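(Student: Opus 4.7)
The plan is to derive this corollary directly from Theorem \ref{a1} by specializing and using a simple monotonicity argument. Let $k$ denote the number of vertices of $G$ having maximum degree $\Delta$, where $1\le k<n$ since $G$ is connected and not regular in the nontrivial case. By Theorem \ref{a1},
$$
\sigma_{t}(G)\ge \frac{k}{n-k}\,(n\Delta-2m)^{2}.
$$

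The key observation is that the function $f(k)=\frac{k}{n-k}$ is strictly increasing on $[1,n)$, so $\frac{k}{n-k}\ge \frac{1}{n-1}$ for every $k\ge 1$. Combining this with the previous inequality yields
$$
\sigma_{t}(G)\ge \frac{(n\Delta-2m)^{2}}{n-1},
$$
which is the desired bound.

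For the equality characterization, one would argue in two directions. First, if the stated equality holds and $n\Delta\ne 2m$, then the monotonicity step forces $k=1$, and the equality clause of Theorem \ref{a1} then forces the remaining $n-1$ vertices to share the common degree $\delta=\frac{2m-\Delta}{n-1}$. Conversely, a direct substitution into Theorem \ref{a1} confirms that such a degree sequence achieves equality.

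Since the corollary is essentially the $k=1$ instance of Theorem \ref{a1}, there is no real obstacle; the only small point requiring care is treating the degenerate case $n\Delta=2m$ (when $G$ is $\Delta$-regular, both sides vanish and the description of the extremal graphs degenerates consistently to the regular case). The substance of the argument has already been carried out in the proof of Theorem \ref{a1}.
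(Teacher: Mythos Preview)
Your proposal is correct and follows essentially the same route as the paper: apply Theorem~\ref{a1} and then use the monotonicity of $k\mapsto \frac{k}{n-k}$ (the paper writes this as $\frac{k}{n-k}=-1+\frac{n}{n-k}\ge -1+\frac{n}{n-1}=\frac{1}{n-1}$) to reduce to $k=1$. Your handling of the equality case, including the degenerate regular situation $n\Delta=2m$, is in fact slightly more careful than the paper's brief sentence.
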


\begin{proof}
Since $\frac{k}{n-k}=-1+\frac{n}{n-k}\geq -1+\frac{n}{n-1}=\frac{1}{n-1}$ we get that $\sigma_{t}(G)\geq \frac{(n\Delta-2m)^{2}}{n-1}.$ The equality occurs if and only if exactly one vertex of $G$ 
has maximum degree $\Delta$ and the remaining $n-1$ vertices have degree $\frac{2m-\Delta}{n-1}.$
\end{proof}

In \cite{ds} it was proved that among all trees on $n$ vertices, the paths have the smallest $\sigma_{t}$-index.
Based on the previous theorem, we give an alternative proof of this result.

\begin{corollary}\label{a3} For every tree $T$ on $n$ vertices, we have
$$
\sigma_{t}(T)\geq 2n-4.
$$
Moreover, the equality holds if and only if $T$ is the path $P_n$.
\end{corollary}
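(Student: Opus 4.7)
The plan is to deduce the tree bound from a ``min-degree companion'' of Theorem \ref{a1}: for every graph $G$ with $\ell<n$ vertices of minimum degree $\delta$,
\[
\sigma_{t}(G)\ \geq\ \frac{\ell}{\,n-\ell\,}\bigl(2m-n\delta\bigr)^{2},
\]
with equality iff $G$ has precisely two distinct degrees, namely $\delta$ (on $\ell$ vertices) and some common larger value (on the remaining $n-\ell$). I would establish this bound by mirroring the proof of Theorem \ref{a1}: writing $d_{1}\geq\cdots\geq d_{n}$, the last $\ell$ terms equal $\delta$, and applying the quadratic--arithmetic mean inequality to the first $n-\ell$ terms (whose sum is $2m-\ell\delta$) gives $\sum_{i\leq n-\ell}d_{i}^{2}\geq (2m-\ell\delta)^{2}/(n-\ell)$. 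Combining with the $\ell\delta^{2}$ contribution and Proposition \ref{eden}, the same algebraic collapse that appears in the original proof delivers the displayed inequality.

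Granting this companion bound, specialising to a tree $T$ is immediate. Since $m=n-1$ and $\delta=1$, one has $2m-n\delta=n-2$; and every tree has at least two leaves, so $\ell\geq 2$. Thus
\[
\sigma_{t}(T)\ \geq\ \frac{\ell\,(n-2)^{2}}{n-\ell}.
\]
The map $\ell\mapsto \ell/(n-\ell)$ is strictly increasing on $[1,n-1]$, so the right-hand side is minimised over $\ell\geq 2$ precisely at $\ell=2$, yielding $\sigma_{t}(T)\geq 2(n-2)=2n-4$, as claimed.

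For the equality characterisation, both inequalities above must be tight. Tightness in the last step forces $\ell=2$; tightness in the companion bound forces the $n-2$ non-leaves to share a common degree $d$, and the handshake identity $2(n-1)=2\cdot 1+(n-2)d$ then pins $d=2$. A tree with exactly two leaves and every internal vertex of degree two is the path $P_{n}$, completing the characterisation.

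The main obstacle is nothing more than the careful bookkeeping required to state and verify the min-degree companion of Theorem \ref{a1}; conceptually it is just its mirror, but one needs to keep track of the sign of $2m-n\delta$ (non-negative whenever $\delta\leq\overline{d}$, which always holds) and confirm that all equalities are achieved simultaneously.
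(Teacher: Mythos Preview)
Your argument is correct and takes a genuinely different route from the paper's. The paper applies Theorem~\ref{a1} (the $\Delta$-version) directly and then performs a four-way case split on the value of $\Delta$ (namely $\Delta=2$; $\Delta=3$ with $k=1$; $\Delta=3$ with $k\ge 2$; $\Delta\ge 4$), computing or bounding $\sigma_t$ separately in each case. Your min-degree companion of Theorem~\ref{a1} is the exact mirror of that result, and your algebraic verification $nM_1-4m^2=\frac{\ell}{n-\ell}(2m-n\delta)^2$ under the QM--AM equality goes through just as in the original. The payoff is that for trees $\delta=1$ is fixed, so the bound becomes the single clean inequality $\sigma_t(T)\ge \ell(n-2)^2/(n-\ell)$ with only one free parameter $\ell\ge 2$; monotonicity in $\ell$ then finishes the job without any case analysis. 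Your equality discussion is also tidy: $\ell=2$ and all non-leaves of equal degree forces $d=2$ via handshaking, hence $T=P_n$. One cosmetic point: the companion bound needs $\ell<n$, which fails for the degenerate tree $P_2$ (where both vertices are leaves), but there $\sigma_t=0=2n-4$ trivially; the paper's own case split has the same blind spot.
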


\begin{proof}
From $m=n-1$ and Theorem \ref{a1} we get 
\begin{equation}
\label{tree}
\sigma_{t}(T)\geq \frac{k}{n-k} \big(n(\Delta-2)+2\big)^{2}.
\end{equation}
If $\Delta=2$, then $T$ is $P_n$. 
In this case $k=n-2$ and $\sigma_{t}(T)=2n-4.$ Suppose now $\Delta=3$ and $k=1$.
Then the tree $T$ contains $3$ pendent vertices and $n-4$ vertices of degree $2$.
In this case we get
$$
\sigma_{t}(T)=(n-4)(3-2)^{2}+3(3-1)^{2}+3(n-4)(2-1)^{2}=4n-4>2n-4.
$$
If $\Delta = 3$ and $k\geq2$ , from (\ref{tree}) we have
$$
\sigma_{t}(T)\geq \frac{2}{n-2}\cdot (n+2)^{2}>2n-4.
$$
Finally, if $\Delta \geq 4$, then we get
$$
\sigma_{t}(T)\geq \frac{1}{n-1}\cdot (2n+2)^{2}>2n-4.
$$
\end{proof}

In the next result we give a shorter and simpler proof of Theorem 2.1 published in \cite{ds}.

\begin{theorem}\label{a4}
Let $G$ be a simple, undirected graph on $n$ vertices. If $G$ is not a regular graph, then
\begin{equation}
\sigma_{t}(G)\geq
\left\{
\begin{array}{ll} n-1, & \mbox{if $n$ is odd;} \\
2n-4, & \mbox{otherwise.} 
\end{array}
\right.
\end{equation}
\end{theorem}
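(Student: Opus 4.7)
The plan is to combine Proposition \ref{eden} with an integrality argument on the degrees. By Proposition \ref{eden}, $\sigma_t(G) = nM_1(G) - 4m^2$. For each vertex $v_i$ set $t_i := nd_i - 2m$; these are integers summing to zero, and a direct expansion of $(nd_i - 2m)^2$ gives
\[
\sigma_t(G) \;=\; \frac{1}{n}\sum_{i=1}^n t_i^2.
\]
All of the $t_i$ are congruent modulo $n$, so putting $s := 2m \bmod n \in \{0, 1, \ldots, n-1\}$ we can parametrize $t_i = nk_i - s$ with $k_i \in \mathbb{Z}$, and the constraint $\sum t_i = 0$ becomes $\sum k_i = s$. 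Substituting, a short computation yields
\[
\sigma_t(G) \;=\; n\sum_{i=1}^n k_i^2 \;-\; s^2.
\]

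I would next minimize $\sum k_i^2$ subject to $k_i \in \mathbb{Z}$ and $\sum k_i = s$. For $s \geq 1$, convexity (or a simple exchange argument) shows the minimum equals $s$, attained by taking $s$ of the $k_i$ equal to $1$ and the rest equal to $0$; this yields $\sigma_t(G) \geq s(n-s)$. In the remaining case $s = 0$, the hypothesis that $G$ is irregular forces some $k_i$ to be nonzero, and since the $k_i$ sum to zero at least one is positive and at least one is negative, so $\sum k_i^2 \geq 2$ and $\sigma_t(G) \geq 2n$.

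To finish, I split on the parity of $n$. If $n$ is odd, then $s$ ranges over $\{0, 1, \ldots, n-1\}$ and the smallest value of $s(n-s)$ for $s \geq 1$ is $n-1$ (at $s=1$ or $s=n-1$), which lies below the value $2n$ coming from $s=0$, hence $\sigma_t(G) \geq n-1$. If $n$ is even, then $2m$ and $n$ are both even, which forces $s$ to be even, so $s \in \{0, 2, 4, \ldots, n-2\}$; the smallest value of $s(n-s)$ for even $s \geq 2$ is $2(n-2) = 2n-4$, and this remains the overall minimum. The main obstacle is conceptual rather than computational: one must notice that when $n \nmid 2m$ the graph is automatically irregular (the average degree is non-integer) so no extra hypothesis is needed, whereas when $n \mid 2m$ the irregularity hypothesis is genuinely used to rule out $\sum k_i^2 = 0$. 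The parity restriction on $s$ in the even case is exactly what forbids the value $s=1$ responsible for the smaller bound $n-1$, producing the parity-dependent bound stated in the theorem.
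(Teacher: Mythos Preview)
Your proof is correct and takes a genuinely different route from the paper's. The paper argues graph-theoretically: starting from any irregular $G$, it repeatedly applies the edge-swap of Lemma~\ref{pet} (delete an edge at a maximum-degree vertex, add it at a minimum-degree vertex) to drive the degree sequence toward one with all degrees in $\{a-1,a\}$, then computes $\sigma_t=x(n-x)$ where $x$ is the number of vertices of degree $a$ and minimizes over $x$ subject to the parity constraint coming from the handshake lemma. Your argument bypasses graph transformations entirely: via Proposition~\ref{eden} you rewrite $\sigma_t(G)=\tfrac1n\sum_i (nd_i-2m)^2$, parametrize by the common residue $s=2m\bmod n$, and reduce to the integer optimization $\min\sum k_i^2$ subject to $\sum k_i=s$. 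This is cleaner in that it never has to check that an intermediate object is still a simple graph, and it makes transparent exactly where the irregularity hypothesis enters (only when $s=0$) and where parity enters (the evenness of $2m$ forces $s$ even when $n$ is even). The paper's approach, on the other hand, is more constructive: the edge-swap process actually produces extremal graphs, so one sees immediately that the bounds $n-1$ and $2n-4$ are attained, something your argument establishes only at the level of degree sequences rather than graphs.
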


\begin{proof}
Let $v_{1}$ and $v_{n}$ be vertices with the maximum and the minimum degrees, respectively. There exists a vertex $v_{i}$ that is connected to $v_{1}$ and that is not connected to $v_{n}$.
Based on Lemma 5, by removing the edge $v_{1}v_{i}$ and connecting $v_{i}$ and $v_{n}$ we decrease the $\sigma_{t}$-irregularity if $d(v_1)\ge d(v_n)+2$.
We repeat this procedure until possible.

We can finish in two sequences.
Either $d(v_1)=a+1$, $d(v_n)=a-1$ and $d(v_j)=a$ if $2\le j\le n-1$ (in which case the next graph would be regular), or $d(v_1)=a$, $d(v_n)=a-1$ and $a\ge d(v_j)\ge a-1$ if $2\le j\le n-1$ (in which 
case our operation does not work anymore).
However, in the first case $v_n$ cannot be connected to all vertices $v_2,\dots,v_{n-1}$, and connecting it to a new vertex decreases $\sigma_t$ and reduces this case to the second one.

Hence, the graph minimizing $\sigma_{t}$ contains $x$ vertices of degree $a$ and $n-x$ vertices of degree $a-1$.
Then
$$
\sigma_{t}(G)=x(n-x)\big(a-(a-1)\big)^{2}=x(n-x).
$$
Consequently, the minimum $\sigma_t$ is achieved if $x=1$ or $x=n-1$, and $\sigma_{t}(G)=n-1$.
However, the sum of the vertex degrees is an even number, and so
$xa+(n-x)(a-1)=n(a-1)+x$ is even.
Hence if $n$ is even, then $x$ must be even as well.
And since $x(n-x)$ should be as small as possible, we conclude that $x=2$ or $x=n-2$, and $\sigma_{t}(G)=2(n-2)$.
\end{proof}

\section{Sigma index versus sigma total index via Laplacian eigenvalues}

Let $G$ be a connected graph with vertex set $V(G)$ and edge set $E(G)$. Let us suppose that the vertices of $G$ are labelled $\{1, 2, 3, \ldots, n\}$ with corresponding degrees $\{d_{1}, d_{2}, 
d_{3}, \ldots, d_{n}\}.$  The Laplacian matrix $L$ of the graph $G$ is defined as
\begin{equation}
L_{ij}=
\left\{
\begin{array}{rl}
-1, & \mbox{if $ij\in E(G)$}, \\
0, & \mbox {if $ij\notin E(G)$ and $i\neq j$}, \\
d_{i}, & \mbox{if $i=j$}.
\end{array}
\right.
\end{equation}

It is obvious from the definition that $L$ is a positive semidefinite matrix. Surveys of its variegated properties can be
found in \cite{meris, mohar}. The quadratic form defined by $L$ has the following useful expression
(where we identify the vector $x\in \mathbb{R}$  with a function $x: V(G)\rightarrow \mathbb{R}$):
\begin{equation}
\label{laplacian}
x^{T}Lx=\sum_{uv\in E(G)}\big(x(u)-x(v)\big)^{2}.
\end{equation}
 Let $\mu_{1}\leq \mu_{2}\leq \cdots \leq \mu_{n}$ be the Laplacian eigenvalues of $G$.
 The results in this section follow from the well-known Fiedler's  characterization of the largest and the second smallest Laplacian eigenvalue of $G$, that is, $\mu_{n}$ and $\mu_{2},$ see 
 \cite{fiedler}.
 
\begin{proposition}
\label{b1}
Let $G$ be a connected graph on $n$ vertices.
Then
$$
\mu_{n}=n \max _{x} \frac{\sum_{uv\in E(G)}(x(u)-x(v))^{2}}{\sum_{\{u,v\}\subseteq V(G)}(x(u)-x(v))^{2}},
$$
and
$$
\mu_{2}=n \min _{x} \frac{\sum_{uv\in E(G)}(x(u)-x(v))^{2}}{\sum_{\{u,v\}\subseteq V(G)}(x(u)-x(v))^{2}},
$$
where $x$ is a nonconstant vector.
\end{proposition}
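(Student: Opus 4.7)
The plan is to reduce the claim to the standard Rayleigh–Ritz characterization of the extremal Laplacian eigenvalues. By equation (\ref{laplacian}), the numerator $\sum_{uv\in E(G)}(x(u)-x(v))^{2}$ is already the quadratic form $x^{T}Lx$, so the only real work is to recast the denominator in a comparable form and then apply the min--max principle.

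First, I would rewrite the sum over all pairs. Expanding
$$
\sum_{\{u,v\}\subseteq V(G)}\bigl(x(u)-x(v)\bigr)^{2}
= \tfrac{1}{2}\sum_{u,v\in V(G)}\bigl(x(u)-x(v)\bigr)^{2}
= n\sum_{u\in V(G)} x(u)^{2} - \Bigl(\sum_{u\in V(G)} x(u)\Bigr)^{2},
$$
and setting $\bar x = \frac{1}{n}\sum_{u} x(u)$ and $y = x - \bar x\mathbf{1}$ (the projection of $x$ onto $\mathbf{1}^{\perp}$), this denominator equals $n\,y^{T}y$. This is exactly the reformulation used in Lemma \ref{četiri} (applied to the coordinates of $x$ rather than to degrees). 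Since $L\mathbf{1}=0$, the numerator is invariant under the shift: $y^{T}Ly = x^{T}Lx$. Consequently, for any nonconstant $x$ (equivalently, any $y\neq 0$ with $y\perp\mathbf{1}$),
$$
n\cdot\frac{\sum_{uv\in E(G)}(x(u)-x(v))^{2}}{\sum_{\{u,v\}\subseteq V(G)}(x(u)-x(v))^{2}}
= n\cdot\frac{y^{T}Ly}{n\, y^{T}y}
= \frac{y^{T}Ly}{y^{T}y}.
$$

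Second, I would invoke the Courant--Fischer theorem together with the fact that $L$ is positive semidefinite with $\mu_{1}=0$ and eigenvector $\mathbf{1}$ (here I use that $G$ is connected, so the multiplicity of $0$ is exactly one). The Rayleigh--Ritz formulas then give
$$
\mu_{2} \;=\; \min_{\substack{y\neq 0\\ y\perp \mathbf{1}}} \frac{y^{T}Ly}{y^{T}y},
\qquad
\mu_{n} \;=\; \max_{y\neq 0} \frac{y^{T}Ly}{y^{T}y} \;=\; \max_{\substack{y\neq 0\\ y\perp \mathbf{1}}} \frac{y^{T}Ly}{y^{T}y},
$$
where the last equality holds because the maximum is attained on an eigenvector for $\mu_{n}>0$, which already lies in $\mathbf{1}^{\perp}$. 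Combining with the identification from the previous paragraph yields both formulas of the proposition.

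The only conceptual step is the reformulation of the denominator; once that is in place, the result is an immediate consequence of the Rayleigh quotient characterization, so there is no substantial obstacle. One minor technical point worth checking is the bijection between nonconstant $x$ and nonzero $y\in\mathbf{1}^{\perp}$: every nonconstant $x$ yields $y\neq 0$, and conversely every $y\in\mathbf{1}^{\perp}\setminus\{0\}$ arises (taking $x=y$) as a nonconstant vector, so the extrema on the two sides indeed agree.
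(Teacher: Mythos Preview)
Your argument is correct. The reduction of the denominator to $n\,y^{T}y$ via centering, the invariance of the numerator under the shift $x\mapsto x-\bar x\mathbf{1}$ (using $L\mathbf{1}=0$), and the final appeal to the Rayleigh--Ritz characterization of $\mu_{2}$ and $\mu_{n}$ are all sound; the bijection between nonconstant $x$ and nonzero $y\in\mathbf{1}^{\perp}$ is handled cleanly, and the observation that the unrestricted maximum of the Rayleigh quotient already lies in $\mathbf{1}^{\perp}$ (since $\mu_{n}>0$ for a connected graph on $n\ge 2$ vertices) closes the only potential gap.

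As for comparison with the paper: the paper does not actually prove this proposition. It is stated as a known result and attributed to Fiedler \cite{fiedler}, with the surrounding results (Propositions~\ref{b2} and~\ref{b4}) derived from it. Your write-up therefore supplies what the paper outsources to the literature; it is precisely the standard derivation one would give, and nothing in it conflicts with how the result is used downstream.
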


Proposition \ref{b1} implies the following inequality.

\begin{proposition}
\label{b2}
Let $G$ be a connected graph on $n$ vertices. Then
\begin{equation}
\label{relation} 
\sigma(G)\leq \frac{\mu_{n}}{n}\sigma_{t}(G),
\end{equation}
where $\mu_{n}$ is the largest Laplacian eigenvalue of $G$.
\end{proposition}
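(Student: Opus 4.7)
The plan is to deduce the inequality directly from Fiedler's variational characterization of $\mu_n$ in Proposition \ref{b1} by plugging in a carefully chosen test vector. Since the right-hand side of that characterization is a maximum over all nonconstant vectors $x$, any specific choice of $x$ yields a lower bound on $\mu_n$. The natural candidate is the degree vector itself, $x(v) = d(v)$ for every $v \in V(G)$.

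With this choice, the numerator becomes
$$\sum_{uv \in E(G)} \bigl(x(u)-x(v)\bigr)^2 = \sum_{uv \in E(G)} \bigl(d(u)-d(v)\bigr)^2 = \sigma(G),$$
by the very definition \eqref{sigma} of the sigma index. Similarly, the denominator becomes
$$\sum_{\{u,v\}\subseteq V(G)} \bigl(x(u)-x(v)\bigr)^2 = \sum_{\{u,v\}\subseteq V(G)} \bigl(d(u)-d(v)\bigr)^2 = \sigma_t(G),$$
which is exactly the definition \eqref{total} of the sigma total index. Proposition \ref{b1} then yields $\mu_n \geq n \cdot \sigma(G)/\sigma_t(G)$, which rearranges to the claimed bound \eqref{relation}.

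The only caveat is that Proposition \ref{b1} requires $x$ to be a nonconstant vector, so the substitution $x = d$ is legitimate only when $G$ is non-regular. If $G$ is regular, however, the degree function is constant on $V(G)$, which forces both $\sigma(G) = 0$ and $\sigma_t(G) = 0$, and the inequality \eqref{relation} holds trivially as $0 \leq 0$. I would mention this boundary case explicitly at the end of the proof for completeness.

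I do not anticipate a genuine obstacle: the entire argument is a single substitution into Fiedler's formula combined with the regular/non-regular dichotomy. The only thing worth flagging carefully is that Proposition \ref{b1} supplies the max as an equality, so using the test vector $x = d$ gives a valid lower bound on $\mu_n$ in the right direction — no additional estimation is needed.
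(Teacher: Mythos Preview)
Your proposal is correct and matches the paper's own proof essentially verbatim: plug the degree vector into Fiedler's characterization from Proposition~\ref{b1} to obtain $\mu_n \ge n\,\sigma(G)/\sigma_t(G)$ when $G$ is irregular, and note that both sides vanish when $G$ is regular. There is nothing to add.
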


\begin{proof} Let us suppose that $G$ is an irregular graph. Then the vector $d=(d_{1}, \ldots, d_{n})$ is nonconstant.
From (\ref{laplacian}) we get $d^{T}Ld=\sum_{uv\in E(G)}\big(d(u)-d(v)\big)^{2}=\sigma(G)$.
Now applying Proposition \ref{b1} we get
$$
\mu_{n}\geq n \cdot \frac{\sum_{uv\in E(G)}(d(u)-d(v))^{2}}{\sum_{\{u,v\}\subseteq V(G)}(d(u)-d(v))^{2}}= n \cdot \frac{\sigma(G)}{\sigma_{t}(G)}=\frac{n\sigma(G)}{\sigma_{t}(G)}.
$$
If $G$ is a regular graph, then $\sigma(G)=\sigma_{t}(G)=0$, so inequality (\ref{relation}) trivially holds.
\end{proof}

Also the next result is based on Proposition \ref{b1} and follows using the same reasoning like in Proposition \ref{b2}.

\begin{proposition}
\label{b4}
Let $G$ be a connected graph on $n$ vertices. Then
$$
\sigma_{t}(G)\leq \frac{n}{\mu_{2}}\sigma(G),
$$
where $\mu_{2}$ is the second smallest Laplacian eigenvalue of $G$.
\end{proposition}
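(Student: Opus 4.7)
The plan is to mimic almost verbatim the proof of Proposition \ref{b2}, merely swapping the maximum characterization of $\mu_n$ for the minimum characterization of $\mu_2$ provided by Proposition \ref{b1}. The key observation is that the degree vector $d=(d_1,\ldots,d_n)$ is a legitimate test vector in the Rayleigh-type quotient appearing in Proposition \ref{b1}, and plugging it in produces $\sigma(G)$ in the numerator and $\sigma_t(G)$ in the denominator.

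First I would dispose of the trivial case. If $G$ is regular, then $d(u)=d(v)$ for every pair $u,v$, so both $\sigma(G)=0$ and $\sigma_t(G)=0$, and the asserted inequality holds trivially (reading $0\le 0$, regardless of $\mu_2$). From now on I assume $G$ is irregular, which is precisely the hypothesis under which the vector $d$ is nonconstant and thus admissible in Proposition \ref{b1}.

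Next, I would invoke the minimum characterization of $\mu_2$ with the choice $x=d$. Since $d$ is nonconstant, the defining fraction in Proposition \ref{b1} is well defined, and taking the minimum over all nonconstant vectors yields the inequality
\begin{equation*}
\mu_{2} \;\le\; n\cdot\frac{\sum_{uv\in E(G)}\bigl(d(u)-d(v)\bigr)^{2}}{\sum_{\{u,v\}\subseteq V(G)}\bigl(d(u)-d(v)\bigr)^{2}} \;=\; n\cdot\frac{\sigma(G)}{\sigma_{t}(G)},
\end{equation*}
using definitions (\ref{sigma}) and (\ref{total}) of $\sigma$ and $\sigma_t$. Rearranging (and noting that $\sigma_t(G)>0$ for an irregular graph, so dividing is legitimate; $\mu_2>0$ likewise for a connected graph) gives exactly $\sigma_t(G)\le \frac{n}{\mu_2}\sigma(G)$.

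There is essentially no obstacle: the only thing to verify carefully is that the test vector $x=d$ is eligible, i.e., that $d$ is nonconstant, which is equivalent to $G$ being irregular, and this is precisely the case I separated off at the start. The argument is therefore a one-line application of Proposition \ref{b1}, entirely parallel to Proposition \ref{b2}.
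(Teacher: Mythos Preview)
Your proof is correct and follows exactly the approach the paper indicates: the paper does not spell out a separate argument for Proposition~\ref{b4} but simply remarks that it ``follows using the same reasoning like in Proposition~\ref{b2},'' i.e., by plugging the degree vector into Fiedler's minimum characterization of $\mu_2$ from Proposition~\ref{b1}. Your handling of the regular case and the admissibility of $d$ as a nonconstant test vector matches this intended argument precisely.
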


It is interesting to characterize graphs $G$ for which $\sigma_t(G)=\sigma(G)$.
A graph $G$ is called \emph{a generalized complete $k$-partite graph} if its vertices can be partitioned into $k$ nonempty sets $V_1,V_2,\dots,V_k$, such that subgraphs induced by $V_i$ are regular 
and if $u\in V_i$, $v\in V_j$, $1\le i<j\le k$, then $uv$ is an edge of $G$.
In other words, $G$ is obtained from a complete $k$-partite graph with partition $V_1,V_2,\dots,V_k$, where the partite sets are not necessarily independent, but they induce regular graphs.

\begin{proposition}\label{b3}
\label{prop:sigma=sigmat}
We have $\sigma(G)=\sigma_t(G)$ if and only if $G$ is a generalized complete $k$-partite graph.
\end{proposition}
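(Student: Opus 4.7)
The plan is to first rewrite the condition $\sigma(G)=\sigma_t(G)$ in a more usable form. Since
\[
\sigma_t(G)-\sigma(G)=\sum_{\substack{\{u,v\}\subseteq V(G)\\ uv\notin E(G)}}\bigl(d(u)-d(v)\bigr)^{2},
\]
each summand is nonnegative, so equality is equivalent to the combinatorial condition that \emph{every pair of non-adjacent vertices of $G$ has the same degree}. I will show the two directions using this reformulation.

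For the easy direction, I would assume $G$ is a generalized complete $k$-partite graph with partition $V_1,\dots,V_k$. Any vertex $u\in V_i$ is adjacent to all $n-|V_i|$ vertices outside $V_i$ and contributes an additional $d_{G[V_i]}(u)$ from its neighbors inside $V_i$. Because $G[V_i]$ is regular, the internal degree is the same for every $u\in V_i$, so all vertices of $V_i$ share the common degree $d_{G[V_i]}(u)+(n-|V_i|)$. A non-edge $uv$ of $G$ cannot cross two parts (between different parts every edge is present), so $u,v$ lie in the same part $V_i$ and therefore $d(u)=d(v)$, giving $\sigma(G)=\sigma_t(G)$.

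For the converse, assume the degree condition on non-edges holds and pass to the complement $\bar G$. Let $V_1,\dots,V_k$ be the vertex sets of the connected components of $\bar G$. Two vertices adjacent in $\bar G$ are non-adjacent in $G$, hence share the same $G$-degree; chaining this along any path in a component, all vertices in $V_i$ have a common $G$-degree $\Delta_i$. On the other hand, if $u\in V_i$ and $v\in V_j$ with $i\ne j$, then $uv$ is not an edge of $\bar G$, so it is an edge of $G$; thus every $u\in V_i$ is joined in $G$ to all $n-|V_i|$ vertices outside $V_i$. Consequently $d_{G[V_i]}(u)=\Delta_i-(n-|V_i|)$, which is independent of the choice of $u\in V_i$, so $G[V_i]$ is regular. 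Together with the fact that all edges between distinct parts are present, this is precisely the definition of a generalized complete $k$-partite graph.

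The main obstacle is the converse direction, and the key idea that unlocks it is to look at the complement: the combinatorial condition ``non-adjacent pairs have equal degree'' is naturally a condition about edges of $\bar G$, and transitivity along connected components of $\bar G$ is what produces the parts $V_i$ on which degrees are constant. Once this viewpoint is adopted, verifying regularity of the induced subgraphs $G[V_i]$ and completeness between parts becomes a short calculation, as above.
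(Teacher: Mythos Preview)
Your proof is correct and follows essentially the same line as the paper's: both reduce $\sigma(G)=\sigma_t(G)$ to the condition that any two non-adjacent vertices have equal degree, and then exhibit a partition certifying the generalized complete $k$-partite structure. The only variation is in the choice of partition---you take the connected components of $\bar G$, whereas the paper takes the degree classes---and either choice works.
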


\begin{proof}
Observe that
$$
\sigma_t(G)=\sigma(G)+\sum_{uv\notin E(G)}\big(d(u)-d(v)\big)^2.
$$
Hence $\sigma_t(G)=\sigma(G)$ if and only if $d(u)=d(v)$ whenever $uv\notin E(G)$.
So split the vertices of $G$ into sets according to their degree.
This gives a partition and $\sigma_t(G)=\sigma(G)$ if and only if any two vertices from different partite sets are connected.
The rest follows from the fact that each partite set contains vertices of the same degree.
\end{proof}

Observe that an irregular bipartite graph is a generalized complete $k$-partite graph if and only if it is a complete bipartite graph, while a tree is a generalized complete $k$-partite graph if and only if it 
is the star graph $S_n$.








\section{Concluding remarks and further work}

As we already stated, $\sigma_{t}$ is an irregularity measure defined as a natural modification of the Albertson irregularity $\sigma$. 
In this paper we examined graphs with the maximum and the minimum $\sigma_{t}$-irregularity.
Our consideration addressed various types of graphs: general irregular graphs, triangle-free graphs, bipartite graphs and trees.
Related to the triangle-free graphs, in Proposition~{\ref{tretaa}} we proved that among complete bipartite graphs on $n$ vertices,  either the complete bipartite graph 
$K_{\lfloor\frac{n}{4}(2-\sqrt{2})\rfloor, \lceil\frac{n}{4}(2+\sqrt{2})\rceil }$ or $K_{\lceil\frac{n}{4}(2-\sqrt{2})\rceil, \lfloor\frac{n}{4}(2+\sqrt{2})\rfloor}$ has the maximum 
$\sigma_{t}$-irregularity.
We believe that among all triangle-free graphs the complete bipartite graphs have the maximum $\sigma_{t}(G)$.
Thus, we pose the following conjecture:

\begin{Conjecture}\label{c1}
Among connected triangle-free graphs on $n$ vertices, the extremal complete bipartite graphs have the maximum $\sigma_{t}(G)$-irregularity.
\end{Conjecture}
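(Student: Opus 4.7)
My plan starts from Proposition~\ref{vtoraa}, which already gives $\sigma_{t}(G)\le m(n^{2}-4m)$ for any triangle-free graph $G$ on $n$ vertices with $m$ edges, with equality precisely when $G$ is complete bipartite. Write $f(m)=m(n^{2}-4m)$ and let $K_{a^{*},\,n-a^{*}}$ be whichever of the two complete bipartite graphs in Proposition~\ref{tretaa} realises the larger $\sigma_{t}$, with $m^{*}=a^{*}(n-a^{*})$ and $\sigma_{t}(K_{a^{*},\,n-a^{*}})=f(m^{*})$. The conjecture is then equivalent to showing $\sigma_{t}(G)\le f(m^{*})$ for every connected triangle-free $G$ on $n$ vertices.

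The function $f$ is a downward parabola with vertex at $m_{0}=n^{2}/8$, so $f(m)\le f(m^{*})$ whenever $|m-m_{0}|\ge |m^{*}-m_{0}|$. For every such edge count the conjecture follows at once from Proposition~\ref{vtoraa}: $\sigma_{t}(G)\le f(m)\le f(m^{*})$. The hard integer values of $m$ lie in a short interval around $m_{0}$, and for each of them $m\ne a(n-a)$ for any integer $a$ (by the extremality of $m^{*}$), so Proposition~\ref{vtoraa} yields only the strict but non-quantitative bound $\sigma_{t}(G)<f(m)$. The core of the proof must therefore quantify this strict inequality. Using the identity $nm-M_{1}(G)=\sum_{uv\in E(G)}(n-d(u)-d(v))$, in which every summand is a nonnegative integer by the triangle-free condition $d(u)+d(v)\le n$, one needs a lower bound on the left-hand side strong enough to force $\sigma_{t}(G)=nM_{1}(G)-4m^{2}\le f(m^{*})$.

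My plan to establish this lower bound is through a stability argument. For $m$ in the bad window, $m$ is close to the Mantel bound $\lfloor n^{2}/4\rfloor$, so an Andr\'asfai--Erd\H{o}s--S\'os-type stability theorem should force $G$ to be bipartite (ruling out $C_{5}$-blow-ups and similar non-bipartite triangle-free constructions) and, in fact, to differ from some $K_{a,\,n-a}$ by only a small number of edges. A direct computation of the drop in $M_{1}$ incurred when edges are removed from or relocated within such a near-complete-bipartite $G$, coupled with a case analysis on $n\bmod 4$ analogous to the one in Proposition~\ref{tretaa}, should yield $nm-M_{1}(G)\ge \lceil (f(m)-f(m^{*}))/n\rceil$ throughout the bad window. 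The principal obstacle is the sharpness of this quantitative step: the stability bound must be tight enough that the unavoidable integer rounding cannot overturn the inequality, which becomes delicate both for small $n$ and in the residue classes of $n$ where $m^{*}$ sits unusually far from $m_{0}$; the small-$n$ regime may have to be disposed of by exhaustive check, while the general case should reduce to a careful weighted edge count within the perturbation of $K_{a,\,n-a}$.
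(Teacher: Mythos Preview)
This statement is posed in the paper as an \emph{open conjecture}; the authors do not supply a proof, so there is no argument in the paper to compare your proposal against.

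Your outline contains a decisive factual slip that undermines the whole stability strategy. The vertex of the parabola $f(m)=m(n^{2}-4m)$ sits at $m_{0}=n^{2}/8$, and a direct computation shows that $m^{*}=a^{*}(n-a^{*})$ with $a^{*}\approx\tfrac{n}{4}(2-\sqrt{2})$ also equals $n^{2}/8$ up to an $O(n)$ rounding error (indeed $\tfrac{n}{4}(2-\sqrt{2})\cdot\tfrac{n}{4}(2+\sqrt{2})=n^{2}/8$ exactly in the real optimum). Hence the ``bad window'' of edge counts you isolate is an interval of length $O(n)$ centred at $n^{2}/8$, \emph{not} near the Mantel bound $\lfloor n^{2}/4\rfloor$ as you assert. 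Triangle-free graphs with roughly $n^{2}/8$ edges are very far from Tur\'an--extremal, so no Andr\'asfai--Erd\H{o}s--S\'os or removal-lemma stability statement forces such a graph to be bipartite, let alone close to a fixed $K_{a,n-a}$: already a bipartite graph with two parts of size $n/2$ and only half of its potential edges has $m\approx n^{2}/8$ and need bear no resemblance to any complete bipartite graph.

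Consequently the quantitative step you propose---bounding $nm-M_{1}(G)=\sum_{uv\in E(G)}(n-d(u)-d(v))$ from below by viewing $G$ as a perturbation of some $K_{a,n-a}$---has no foundation in the relevant range of $m$. The genuine difficulty of the conjecture lies precisely in this regime, and your plan as written does not yet provide a mechanism to handle it.
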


As a first step towards the proof of Conjecture~{\ref{c1}} one would expect to prove it for bipartite graphs.
By the definition of $\sigma$ and $\sigma_{t}$ it is obvious that $\sigma(G)\leq \sigma_{t}(G)$.
In order to characterize graphs for which $\sigma(G)=\sigma_{t}(G),$ we defined generalized complete $k$-partite graphs. In the class of trees, we proved that the star $S_{n}$ is the unique graph for 
which $\sigma(G)=\sigma_{t}(G).$ 
Motivated by the inequality $\sigma(G)\leq \sigma_{t}(G),$ it is natural to ask the opposite question: is there a relation of type $\sigma_{t}(G)\leq c\cdot \sigma(G)$, where $c$ is a constant or a 
function of $n$?
(Observe that Proposition~{\ref{b4}} is not of this type.)
For trees on $n$ vertices, we pose the following conjecture:

\begin{Conjecture}
Let $T$ be an $n$-vertex tree.
Then
\begin{equation}
\label{conj}
\sigma_{t}(T)\leq (n-2)\sigma(T).
\end{equation}
The equality holds if and only if $T$ is $P_{n}$.
\end{Conjecture}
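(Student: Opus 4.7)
The plan is an induction on $n$, removing a leaf at each step. The base cases $n \le 4$ are dispatched by enumerating the finitely many trees and verifying $\sigma_t(T) \le (n-2)\sigma(T)$ directly, with equality only at $P_n$.

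For the inductive step, let $T$ be a tree on $n \ge 5$ vertices, let $\ell$ be a leaf of $T$ adjacent to $w \in V(T)$ with $d := d_T(w)$, and set $T' := T - \ell$. Only the degrees of $w$ and $\ell$ change when $\ell$ is deleted, so $M_1(T) - M_1(T') = d^2 - (d-1)^2 + 1 = 2d$, and Proposition~\ref{eden} yields
\[
\sigma_t(T) - \sigma_t(T') \;=\; M_1(T') + 2nd - 8n + 12.
\]
Expanding $\sigma$ edge by edge, only the new edge $w\ell$ contributes from scratch and only the $d-1$ edges at $w$ change their contribution, giving
\[
\sigma(T) - \sigma(T') \;=\; (d-1)(3d-2) - 2 S_w,
\]
where $S_w$ is the sum of the $T'$-degrees of the $T'$-neighbours of $w$. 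Combining these identities with the inductive hypothesis $\sigma_t(T') \le (n-3)\sigma(T')$, a short rearrangement reduces the desired bound $\sigma_t(T) \le (n-2)\sigma(T)$ to the local inequality
\begin{equation}
\label{eq:star}
M_1(T') + 2nd - 8n + 12 - (n-2)\bigl[(d-1)(3d-2) - 2 S_w\bigr] \;\le\; \sigma(T').
\end{equation}

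I would then verify (\ref{eq:star}) by choosing $\ell$ to be an endpoint of a longest path (diameter) of $T$. When $d \ge 3$, the cubic term $(n-2)(d-1)(3d-2)$ in (\ref{eq:star}) dominates the linear growth of $M_1(T')$ and $2nd$, and the inequality follows after elementary bounds using $\sum_v d_{T'}(v) = 2(n-2)$. When $d = 2$, (\ref{eq:star}) collapses to
\[
M_1(T') - 8n + 20 + 2(n-2)\, d_{T'}(w_1) \;\le\; \sigma(T'),
\]
where $w_1$ is the unique $T'$-neighbour of $w$; this is the case that is tight at $P_n$, and the verification must use structural information coming from $\ell$ being a diameter endpoint, in particular the contribution $(d_{T'}(w_1)-1)^2$ to $\sigma(T')$ of the edge $ww_1$ (since $w$ is a leaf of $T'$) together with the contributions of edges deeper in $T'$ whenever $w_1$ has high degree. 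Equality throughout the induction then forces $T' = P_{n-1}$ and $T = P_n$.

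The main obstacle is the borderline case $d = 2$ in (\ref{eq:star}): the inequality is saturated at $T = P_n$, so there is essentially no slack, yet $M_1(T')$ can be inflated by high-degree vertices buried inside $T'$, and $\sigma(T')$ depends on the edge structure rather than just on the degree sequence, so no naive termwise comparison will succeed. Should the direct verification stall, a complementary strategy is a tree-transformation argument (for instance, shifting a leaf from a branching vertex to extend a pendant path) shown not to increase $\sigma_t(T) - (n-2)\sigma(T)$, reducing $T$ step by step to $P_n$, where the inequality holds with equality by direct computation.
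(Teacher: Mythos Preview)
First, note that in the paper this statement is presented as an \emph{open conjecture}; the paper offers no proof, so there is nothing to compare your argument against. What matters is whether your proposal actually closes the problem.

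Your reduction to the local inequality~\eqref{eq:star} is correct, but \eqref{eq:star} is \emph{not} true in general, even with the diameter--endpoint choice of leaf. Take the spider $T$ on $n=2k+1$ vertices consisting of a centre $c$ joined to $k$ paths of length~$2$. Every leaf is a diameter endpoint, and by symmetry the choice of $\ell$ is forced; its neighbour $w$ has $d=d_T(w)=2$ and $w_1=c$ with $d_{T'}(c)=k$. A direct count gives
\[
M_1(T')=k^2+5k-4,\qquad \sigma(T')=(k-1)(k^2-3k+4),
\]
so the left side of~\eqref{eq:star} equals $5k^2-13k+8$, and the difference $\sigma(T')-\text{LHS}$ factors as $(k-1)(k-2)(k-6)$. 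Hence~\eqref{eq:star} \emph{fails} for $k=3,4,5$ (i.e.\ $n=7,9,11$). For $k=3$ one has $\text{LHS}=14>8=\sigma(T')$, yet the conjecture itself holds comfortably for this tree ($\sigma_t(T)=24\le 30=(n-2)\sigma(T)$).

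The reason your induction breaks here is structural: in passing from $\sigma_t(T')\le(n-3)\sigma(T')$ to~\eqref{eq:star} you throw away the slack $(n-3)\sigma(T')-\sigma_t(T')$, which in the spider equals $12$ and is precisely what would absorb the deficit of $6$ in~\eqref{eq:star}. The term $2(n-2)\,d_{T'}(w_1)$ on the left of~\eqref{eq:star} can be of order $n^2$ whenever the second vertex along the diameter is a high-degree branching point, while the edge contributions you single out near $w_1$ recover only $(d_{T'}(w_1)-1)^2+O(d_{T'}(w_1))$ of $\sigma(T')$; this is not enough. So neither the $d=2$ verification you sketch nor the ``choose $\ell$ on a longest path'' heuristic can rescue the step, and any induction of this shape would have to carry a quantitative strengthening of the hypothesis (tracking the slack) rather than the bare inequality. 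Your fallback transformation idea may well be the right route, but as stated it is only a plan, and the inductive argument as written does not go through.
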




\textbf{Acknowledgement}
\vskip 2mm

S. Filipovski is supported in part by the Slovenian Research Agency (research program P1-0285 and research projects N1-0210, J1-3001, J1-3002, J1-3003, and J1-4414). D. Dimitrov has been supported by ARIS program P1-0383 and ARIS project J1-3002.
M. Knor has been partially supported by Slovak research grants
VEGA 1/0567/22, VEGA 1/0069/23, APVV-22-0005 and APVV-23-0076, the Slovenian Research Agency ARIS
program P1-0383 and ARIS project J1-3002.
R. \v{S}krekovski has been partially supported by the Slovenian Research Agency and Innovation (ARIS) program P1-0383, project J1-3002, and the annual work program of Rudolfovo.

\end{document}